\numberwithin{equation}{section}
\newtheorem{theorem}{Theorem}[section]
\newtheorem{lemma}[theorem]{Lemma}
\newtheorem{proposition}[theorem]{Proposition}
\theoremstyle{definition}
\newtheorem{remark}[theorem]{Remark}
\newtheorem{definition}[theorem]{Definition}
\DeclareMathOperator{\ran}{ran}
\DeclareMathOperator{\Aff}{Aff}
\newcommand{\n}[1]{ \left\|#1\right\| }
\newcommand{\bign}[1]{ \big\|#1\big\| }
\newcommand{\N}{{\mathbb{N}}}
\newcommand{\R}{{\mathbb{R}}}
\newcommand{\eps}{{\varepsilon}}
\newcommand{\st}{{\; : \; }}
\renewcommand*{\L}{{\mathcal{L}}} 
\newcommand{\Lr}{{\L^r}}
\newcommand\restr[2]{{% we make the whole thing an ordinary symbol
  \left.\kern-\nulldelimiterspace % automatically resize the bar with \right
  #1 % the function
 % \vphantom{\big|} % pretend it's a little taller at normal size
  \right|_{#2} % this is the delimiter
  }}
\author[J.A.~Ch\'avez-Dom\'{\i}nguez]{Javier Alejandro Ch\'avez-Dom\'{\i}nguez}
\address{Department of Mathematics,
University of Oklahoma,
Norman OK , 73019-3103 USA}
\email{jachavezd@ou.edu}
\subjclass[2010]{Primary: 46A22, Secondary: 46B42, 46A32}
\keywords{Banach lattices, Regular maps, Liftings, Positive Approximation Property}
\title{Ando-Choi-Effros liftings for regular maps between Banach lattices}
\begin{document}

\begin{abstract}
The Ando-Choi-Effros lifting theorem provides conditions under which a bounded linear mapping taking values in a quotient space can be lifted through the quotient map.
We prove two versions of said theorem for regular maps between Banach lattices.
Our conditions mirror the classical ones, but additionally taking into account the order structure.
\end{abstract}

\maketitle

\section{Introduction}

Given a bounded linear map $T : Y \to X/J$ taking values in a quotient space, it is not always possible to find a lifting of $T$ to $X$: that is, a bounded linear map $L : Y \to X$ such that $q \circ  L = T$ where $q : X \to X/J$ is the canonical quotient map.
The classical Ando-Choi-Effros theorem \cite{Ando-nonempty,Choi-Effros} provides conditions under which such a lifting is guaranteed to exist.
The first condition is for the space $Y$ to be separable and have the bounded approximation property:
recall that a Banach space is said to have the \emph{approximation property} (AP) if the identity operator can be uniformly approximated on compact subsets by operators of finite rank, and is said to have the \emph{$\lambda$-bounded approximation property} ($\lambda$-BAP) if these finite-rank operators can be chosen to have norm at most $\lambda$.
The second condition is geometric in nature, and requires $J$ to sit inside $X$ is a particular way which generalizes how a closed two-sided ideal sits inside a $C^*$-algebra; the technical name is that $J$ is an \emph{$M$-ideal} in $X$. 

The Ando-Choi-Effros theorem is an abstract generalization of several well-known extension results, and thus has them as corollaries, including those of
Borsuk-Dugundji \cite{Borsuk,Dugundji}, 
%Pe{\l}czy{\'n}ski, %\cite{Pelczynski-Simultaneous-extension}, %,Pelczynski-Simultaneous-extension-supplement},
and Michael-Pe{\l}czy{\'n}ski \cite{Michael-Pelczynski}.
More recently, the Ando-Choi-Effros theorem has played a recurrent role in the study of approximation properties for Lipschitz-free spaces, see for example \cite{Godefroy-Ozawa,Godefroy,BorelMathurin,CD-ACE-respecting} and the survey \cite{Godefroy-survey}.
The reader is referred to  \cite[Sec. II.6]{Harmand-Werner-Werner} for a more detailed historical account of the results that both preceded and followed the Ando-Choi-Effros theorem.

The first of the two main results of this paper (Theorem \ref{thm-Ando-Choi-Effros-regular maps}) is a lifting theorem in the Ando-Choi-Effros vein in the context of Banach lattices, where the initial map $T$ is regular
(i.e. it is a difference of two positive maps)
 and the lifting can be chosen to be regular as well.
Our conditions mirror the classical ones:
$Y$ will be assumed to have the $\lambda$-bounded positive approximation property ($\lambda$-BPAP), a version of the $\lambda$-BAP where the finite-rank approximations are taken to be positive operators, and the $M$-ideal condition is 
similarly replaced by a version that ``plays well with the order'' (see Definition \ref{def-lattice-M-ideal}). 
 
Lifting theorems in the Ando-Choi-Effros style for Banach spaces with an order have been proved by Ando \cite[Thm. 6]{Ando-closedrange}
and Vesterstr\o{}m \cite[Thm. 9]{Vestertrom},
though our approach is different in several ways.
First, the approximation properties considered by both Ando and Vesterstr\o{}m require the finite-rank approximations to be projections, whereas ours do not.
We require only the bounded positive approximation property, which Vesterstr\o{}m mentions as desirable in \cite[Remark, p. 210]{Vestertrom}.
It should be mentioned that a related result requiring this weaker hypothesis of the bounded positive approximation property 
 was achieved shortly thereafter by Andersen \cite[Thm. 5]{Andersen}.
However, Andersen's conditions differ from ours in another sense: 
though stated in a different way, in the language of Banach lattices his result requires the existence of a strong order unit.
Moreover, the main difference between our results and the previous ones is the fact that we are considering regular maps $T$ and obtaining extensions whose regular norm is controlled. 
As far as we can tell, this is the first time that Ando-Choi-Effros liftings have been considered for regular maps.

There is a second version of the Ando-Choi-Effros Theorem, where the approximation properties on the domain space $Y$ are replaced by the condition that $J$ is an $L_1$-predual.
In the proof, the key property of $L_1$-preduals that is used is the fact that their biduals are injective Banach spaces.
In the Banach lattice setting we prove a corresponding result on Ando-Choi-Effros extensions for regular maps (Theorem \ref{thm-Ando-Choi-Effros-regular maps-injective}), where the domain space is assumed to be a Banach lattice whose bidual is injective as a Banach lattice; such lattices have been characterized by Cartwright  \cite{Cartwright}.

Our proofs are inspired by the typical techniques used for proving results of this sort, which can be described as a careful process of gluing together finite-dimensional pieces.
This will cause significant issues for us, since in a Banach lattice a finite-dimensional subspace is not always contained in a finite-dimensional sublattice.
Therefore we will be requiring suitable technical conditions (namely, that the lattice be Dedekind complete), so that finite-dimensional subspaces will be guaranteed to `almost' be contained in finite-dimensional sublattices (see Lemma \ref{lemma-LO}).   

The rest of this paper is organized as follows.
In Section \ref{sec-notation} we introduce notation and some preliminary results on Banach spaces and lattices.
In Section \ref{sec-order-M-ideals} we recall a notion of $M$-ideal well-suited for the lattice setting.
The short Section \ref{sec-LR-a-la-Dean} presents a version of the Principle of Local Reflexivity for lattices in the style of \cite{Dean73}.
Section \ref{sec-preparatory} contains various technical results that will be used in the proof of the main results, which are proved in Sections \ref{sec-main} and \ref{sec-main-2}.

%------------------------------------------------------------------
%------------------------------------------------------------------
%------------------------------------------------------------------
\section{Notation and preliminaries}\label{sec-notation}

We will consider only real normed spaces.
The (topological) dual of a normed space $X$ will be denoted by $X^*$.
Recall that if $X$ is a Banach space, a linear projection $P : X \to X$ is called an \emph{$M$-projection} (resp. \emph{$L$-projection}) if for all $x \in X$ we have $\n{x} = \max\{ \n{Px}, \n{x-Px} \} $ (resp. $ \n{x} = \n{Px} + \n{x-Px}$).
A closed subspace $J \subset X$ is called an \emph{$M$-summand} (resp. \emph{$L$-summand}) if it is the range of an $M$-projection (resp. $L$-projection),
and it is called an \emph{$M$-ideal} if $J^\perp$ is an $L$-summand in $X^*$.
For the general theory of $M$-ideals in Banach spaces, we refer the reader to the monograph \cite{Harmand-Werner-Werner}.

We will use standard notation for vector and Banach lattices and their theory, as in the books \cite{Aliprantis-Burkinshaw,Meyer-Nieberg,Schaefer}.
Given an ordered vector space $X$, we write $X_+$ for its positive cone.
By a \emph{vector sublattice} of a vector lattice $X$ we mean a linear subspace of $X$ closed under the lattice operations.
An \emph{order ideal} of $X$ is a vector sublattice $Y$ that additionally is \emph{solid}, that is, whenever $y \in Y$ and $|x| \le y$, we have $x \in Y$.
A vector lattice is called \emph{Dedekind complete} if every nonempty subset bounded above has a supremum.

Let $X$ and $Y$ be Banach lattices.
An linear operator $T : X \to Y$ is \emph{positive} when $Tx \ge 0$ for each $x \in X_+$.
We denote the set of positive operators between $X$ and $Y$ by $\L_+(X,Y)$.
We write $\Lr(X,Y)$  for the Banach space of all \emph{regular operators} (i.e., operators that can be written as the difference of two positive linear maps) from $X$ to $Y$, endowed with the norm
$$
\n{T}_r = \inf \big\{ \n{S} \; : \; S \in \L_+(X,Y) \text{ such that } \pm T \le S  \big\}.
$$
Note that this makes sense more generally for normed vector lattices, not necessarily complete.
In general, $\Lr(X,Y)$ need not be a lattice.
If $\sup\{ |Ty| : |y| \le x\}$ exists for every $x \in X_+$ then $|T|$ exists and, for every $x \in X_+$, one has that $|T|(x) = \sup \{ |Ty| : |y| \le x \}$.
This is the case, for instance, if $T$ is finite-rank or if $Y$ is Dedekind complete. If $Y$ is Dedekind complete then $\Lr(X,Y)$ is a Banach lattice and $\n{T}_r = \n{ \, |T|\, }$ for each $T \in \Lr(X,Y)$.
Since $X^* = \Lr(X,\R)$ and $\R$ is Dedekind complete, the dual of a Banach lattice $X$ is always a Dedekind complete Banach lattice.
A linear operator $T : X \to Y$ is called \emph{almost interval preserving} if for every $x \in X_+$ we have that $T[0,x]$ is dense in $[0,Tx]$.

If $E$ and $F$ are Banach lattices, the \emph{projective cone} $C_p$ in $E \otimes F$ is
$$
C_p = \Big\{  \sum_{j=1}^n x_j \otimes y_j  \; : \; x_j \in E_+, y_j \in F_+  \Big\}
$$ 

The \emph{Fremlin tensor product} (also known as \emph{positive projective tensor product}) \cite{Fremlin,Labuschagne} of $E$ and $F$, denoted $E \otimes_{|\pi|} F$,
is the completion of the algebraic tensor product $E \otimes F$ with respect to the norm
$$
\n{u}_{|\pi|} = \inf\Big\{  \sum_{j=1}^n \n{x_j}\cdot\n{y_j} \; : \;  x_j \in E_+, y_j \in F_+, \sum_{j=1}^n x_j \otimes y_j \pm u \in C_p  \Big\}.
$$
This space, with the order given by the closure of $C_p$ with respect to $\n{\cdot}_{|\pi|}$ becomes a Banach lattice.
Moreover, the dual of $E \otimes_{|\pi|}F$ can be canonically identified with $\L^r(E,F^*)$:
the mapping $\Psi : \L^r(E,F^*) \to (E \otimes_{|\pi|}F)^*$ defined by $(\Psi T)(x \otimes y) = (Tx)(y)$ for $x\in E$ and $y\in F$ is an isometric isomorphism of Banach lattices.

Notice that if $T_1 \in \L_+(E_1,F_1)$ and $T_2 \in \L_+(E_2,F_2)$, then {$T_1 \otimes T_2  \in \L_+( E_1 \otimes_{|\pi|} E_2, F_1 \otimes_{|\pi|} F_2)$} and additionally $\n{T_1 \otimes T_2 } \le \n{T_1} \cdot \n{T_2}$.
Moreover, the ``projectivity'' of the $|\pi|$ tensor norm means the following \cite[Thm. 5.2]{Labuschagne}:
If $E_0 \subset E$, $F$ are Banach lattices and $q : E \to E_0$ is almost interval preserving and a metric surjection, then so is $q\otimes Id_F: E \otimes_{|\pi|} F \to E_0 \otimes_{|\pi|} F$.

If $X$ is a Banach lattice, recall that a projection $P: X \to X$ is an \emph{order projection} (that is, the positive projection associated to a projection band) if and only if
$0 \le P \le Id_X$, see \cite[Thm. 1.44]{Aliprantis-Burkinshaw}.

A Banach lattice $X$ is said to have the \emph{$\lambda$-bounded positive approximation property} ($\lambda$-BPAP, for short) if for every finite set $K \subset X$ and every $\eps>0$ there exists a finite-rank positive operator $T :X \to X$ such that $\n{T} \le \lambda$ and $\n{x-Tx} \le \eps$ for all $x \in K$.
By standard arguments, this is equivalent to the following:
for every finite-dimensional subspace $E$ of $X$ and every $\eps>0$, there exists a finite-rank positive operator $T :X \to X$ such that $\n{T} \le \lambda$ and $\n{ \restr{(T-Id)}{E} } \le \eps$.
See \cite{Blanco} for the closely related notion of the  \emph{$\lambda$-bounded lattice approximation property}, where the norm requirement on the map $T$ is replaced by $\n{T}_r \le \lambda$. 

We will call a Banach space $X$  \emph{injective} if whenever $Y_0 \subseteq Y$ are Banach spaces and $T : Y_0 \to X$ is a bounded linear operator, there exists an extension $\tilde{T} : Y \to X$ with $\bign{\tilde{T}} = \n{T}$ (this notion is sometimes called $1$-injective).
A Banach lattice will be called injective if it satisfies the same property, but with the maps $T$, $\tilde{T}$ being positive (and $Y_0$, $Y$ being Banach lattices).

We write $X \cong Y$ to indicate that $X$ and $Y$ are isometrically isomorphic Banach spaces,
and $X \equiv Y$ to indicate that there exists an isometric lattice isomorphism from $X$ onto $Y$.

%------------------------------------------------------------------
%------------------------------------------------------------------
%------------------------------------------------------------------
\section{Order $M$-ideals}\label{sec-order-M-ideals}

In order to prove our version of the Ando-Choi-Effros theorem for Banach lattices, we need a notion of $M$-ideals specific to this setting.
If $X$ is a Banach lattice, an \emph{order $M$-projection} (resp. \emph{order $L$-projection}) is an $M$-projection (resp. $L$-projection) which is also an order projection.
Essentially the same concept has been already considered by Haydon \cite[Def. 3A]{Haydon}
and by Ando \cite[Sec. 3]{Ando-closedrange} (who uses the term \emph{hypostrict}).
Thus, we define:

\begin{definition}\label{def-lattice-M-ideal}
Let $X$ be a Banach lattice. A closed subspace $J \subseteq X$ is called an \emph{order $M$-ideal}
if $J^\perp$ is the range of an order $L$-projection on $X^*$.
\end{definition}

Note that, in particular, an order $M$-ideal is an $M$-ideal.
The following theorem shows that an order ideal which is also an $M$-ideal is automatically an order $M$-ideal.

\begin{theorem}\label{thm-order-M-ideal}
Let $X$ be a Banach lattice and let $J \subseteq X$ be both an order ideal and an $M$-ideal.
Then the $L$-projection $P$ from $X^*$ onto $J^\perp$ is an order projection.
\end{theorem}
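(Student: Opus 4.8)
The plan is to verify the characterization $0 \le P \le Id$ from \cite[Thm. 1.44]{Aliprantis-Burkinshaw}, which says exactly that a projection is an order projection; I will do this by identifying $P$ with the band projection onto $J^\perp$. The first ingredient is that, because $J$ is an order ideal, its annihilator $J^\perp$ is a \emph{band} in $X^*$. Solidity is the main point here: if $f \in J^\perp$, $x \in J_+$, and $|y| \le x$, then $y \in J$ by solidity of $J$, so $f(y)=0$; the Riesz-Kantorovich formula then gives $|f|(x) = \sup\{f(y) : |y|\le x\} = 0$, so $|f|$ annihilates $J_+$ and, $J$ being a sublattice, $|f| \in J^\perp$. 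Thus $J^\perp$ is solid, and it is order closed because if an increasing net $f_\alpha \in J^\perp$ has supremum $f$ in $X^*$ then $f(x) = \sup_\alpha f_\alpha(x) = 0$ for every $x \in J_+$. Since $X^*$ is Dedekind complete, the band $J^\perp$ is a projection band; let $Q$ be the associated band projection, so that $0 \le Q \le Id$ and $Id - Q$ projects onto the disjoint complement $(J^\perp)^d$.

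On the other hand, the hypothesis that $J$ is an $M$-ideal furnishes the $L$-projection $P$ onto $J^\perp$, with complementary $L$-summand $N = \ker P$. Since $Q$ is already an order projection, it suffices to prove $P = Q$, and as both are projections with range $J^\perp$ this reduces to showing $\ker P = (J^\perp)^d$. The crux is the inclusion $N \subseteq (J^\perp)^d$.

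To prove it I would take $g \in N$ and split it along the band decomposition as $g = u + v$ with $u = Qg \in J^\perp$ and $v = (Id-Q)g \in (J^\perp)^d$. These summands are disjoint, so $|g| = |u| + |v| \ge |v|$, and monotonicity of the lattice norm gives $\n{v} \le \n{g}$. Now I exploit the $L$-projection: since $Pg = 0$ and $Pu = u$ (because $u \in J^\perp = \ran P$), linearity forces $Pv = -u$ and hence $(Id - P)v = v + u = g$. The defining identity of the $L$-projection applied to $v$ then yields $\n{v} = \n{Pv} + \n{(Id-P)v} = \n{u} + \n{g}$. Comparing this with $\n{v} \le \n{g}$ forces $\n{u} = 0$, that is $Qg = 0$, so $g \in (J^\perp)^d$.

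With $N \subseteq (J^\perp)^d$ established, the reverse inclusion is a one-line complementation argument: any $w \in (J^\perp)^d$ decomposes under $P$ as $w = Pw + (Id-P)w$ with $Pw = w - (Id-P)w \in J^\perp \cap (J^\perp)^d = \{0\}$, whence $w \in N$. Therefore $\ker P = (J^\perp)^d = \ker Q$, so $P = Q$ is an order projection and in particular $0 \le P \le Id$. I expect the main obstacle to be precisely the disjointness step: a priori the $L$-decomposition and the band decomposition of $X^*$ are unrelated, and the whole content of the theorem is that combining the additivity of the $L$-norm with the monotonicity of the lattice norm collapses the $J^\perp$-component of every element of $\ker P$.
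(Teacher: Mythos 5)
Your proof is correct, and it takes a genuinely different route from the paper's. The paper introduces $J^\# = \{ x^* \in X^* \st \n{x^*} = \bign{\restr{x^*}{J}} \}$ and cites Harmand--Werner--Werner twice: the proof of their Theorem I.2.2 produces an $L$-projection $P_1$ onto $J^\#$ whose kernel is $J^\perp$, and their Proposition I.1.2 says that a contractive projection having the same kernel as an $L$-projection must coincide with it; since the band projection onto $(J^\perp)^d$ is contractive with kernel $J^\perp$, it equals $P_1$, and then $P = Id_{X^*} - P_1$ is the band projection onto $J^\perp$. In other words, the paper matches the two projections by their \emph{kernels}, which lets it invoke a purely Banach-space uniqueness principle; its only order-theoretic input is that $J^\perp$ is a band (asserted there without proof, whereas you prove it via Riesz--Kantorovich). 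You instead match the projections by their \emph{ranges}, and this is where your argument carries genuine content of its own, because range-uniqueness fails for general contractive projections: on $\ell_1^2$ the map $(x,y) \mapsto (x+ay,0)$ with $0 < |a| \le 1$ is a contractive projection onto an $L$-summand that is not the $L$-projection, so no abstract citation could replace your computation. Your lattice-theoretic substitute --- writing $g \in \ker P$ as $g = u + v$ along the band decomposition, getting $\n{v} \le \n{g}$ from disjointness plus monotonicity of the norm, and $\n{v} = \n{u} + \n{g}$ from the $L$-identity, forcing $u = 0$ --- is exactly the point where the order structure compensates for that failure, and it is carried out correctly, including the short complementation argument for the reverse inclusion. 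The trade-off: the paper's proof is shorter but leans on the $J^\#$ machinery of \cite{Harmand-Werner-Werner}; yours is self-contained and makes explicit how the $L$-structure and the lattice structure interact. Both proofs conclude identically, by identifying $P$ with a band projection, so that $0 \le P \le Id_{X^*}$.
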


\begin{proof}
Define
$$
J^\# = \{ x^* \in X^* \st \n{x^*} = \bign{ \restr{x^*}{J} } \}.
$$
From the proof of \cite[Thm. I.2.2]{Harmand-Werner-Werner} it follows that each $x^* \in X^*$ can be written in a unique way as $x^* = u^* + v^*$ with $u^* \in J^\perp$ and $v^* \in J^\#$, and moreover the map $P_1 : x^* \mapsto v^*$ is an $L$-projection from $X^*$ onto $J^\#$, with kernel $J^\perp$.

Since $J$ is an order ideal, $J^\perp$ is a band in the Dedekind complete Banach lattice $X^*$.
Therefore, $X^* = J^\perp \oplus (J^\perp)^d$ holds, and in particular there exists a contractive projection $Q$ from $X^*$ onto $(J^\perp)^d$ with kernel $J^\perp$, which in fact satisfies $0 \le Q \le Id_{X^*}$. By \cite[Prop. I.1.2]{Harmand-Werner-Werner} we have that $Q = P_1$.
It follows that $P=Id_{X^*} - P_1$ satisfies  $0 \le P \le Id_{X^*}$.
\end{proof}

The following example of an order $M$-ideal corresponds to one of the most important classical examples of $M$-ideals.

\begin{proposition}\label{prop-c0-is-order-M-ideal}
Let $X$ be a Banach lattice, and $(E_n)$ a sequence of sublattices of $X$.
Then $c_0(E_n)$ is an order $M$-ideal in $c(E_n)$.
\end{proposition}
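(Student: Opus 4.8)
The plan is to reduce the statement to Theorem \ref{thm-order-M-ideal}, verifying its two hypotheses separately: that $c_0(E_n)$ is an order ideal in $c(E_n)$, and that it is an $M$-ideal in $c(E_n)$. Granting both, Theorem \ref{thm-order-M-ideal} shows that the $L$-projection from $c(E_n)^*$ onto $c_0(E_n)^\perp$ is an order projection, which is precisely the assertion that $c_0(E_n)$ is an order $M$-ideal.

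First I would attend to the lattice bookkeeping. Equip $c(E_n)$ with the coordinatewise order and the supremum norm. Since each $E_n$ is a sublattice and the lattice operations of $X$ are norm-continuous, if $(x_n)$ and $(z_n)$ lie in $c(E_n)$ then $(x_n \vee z_n)$ has entries in the $E_n$ and again converges in $X$; thus $c(E_n)$ is closed under the coordinatewise lattice operations, and the supremum of the monotone lattice norms of $X$ is again a lattice norm, so $c(E_n)$ is a Banach lattice. That $c_0(E_n)$ is an order ideal then follows from monotonicity of the norm: if $(z_n) \in c(E_n)$ satisfies $|z_n| \le |x_n|$ for all $n$ with $(x_n) \in c_0(E_n)$, then $\n{z_n} \le \n{x_n} \to 0$, so $(z_n) \in c_0(E_n)$; since the coordinatewise lattice operations also preserve convergence to zero, $c_0(E_n)$ is a sublattice, and with solidity this makes it an order ideal.

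For the $M$-ideal property I would invoke the three-ball characterization of $M$-ideals \cite[Thm. I.2.2]{Harmand-Werner-Werner}, which here is settled by a single truncation. Given $x = (x_n) \in B_{c(E_n)}$, elements $y^{(1)}, y^{(2)}, y^{(3)} \in B_{c_0(E_n)}$, and $\eps > 0$, pick $N$ so large that $\n{y_n^{(i)}} \le \eps$ whenever $n > N$ and $i \in \{1,2,3\}$, and set $y_n = x_n$ for $n \le N$ and $y_n = 0$ for $n > N$. Then $y = (y_n)$ lies in $c_0(E_n)$, being eventually zero with entries in the $E_n$; for $n \le N$ one has $\n{x_n + y_n^{(i)} - y_n} = \n{y_n^{(i)}} \le 1$, while for $n > N$ one has $\n{x_n + y_n^{(i)} - y_n} \le \n{x_n} + \n{y_n^{(i)}} \le 1 + \eps$. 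Hence $\n{x + y^{(i)} - y} \le 1 + \eps$ for each $i$, which is exactly the three-ball property.

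The only genuinely delicate point is the preliminary claim that $c(E_n)$ is a Banach lattice under the coordinatewise structure — in particular that convergence in $X$ is preserved by the lattice operations and that the space is complete — since limits of sequences in $c(E_n)$ need not belong to any $E_n$. Everything past that is the standard truncation argument that makes $c_0$-type subspaces $M$-ideals, now arranged so that the order structure is supplied for free by Theorem \ref{thm-order-M-ideal}.
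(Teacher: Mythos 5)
Your proposal is correct and takes essentially the same approach as the paper: reduce to Theorem \ref{thm-order-M-ideal} by checking that $c_0(E_n)$ is both an order ideal and an $M$-ideal in $c(E_n)$. The only difference is one of detail: where you spell out the lattice bookkeeping and the truncation argument for the three-ball property, the paper declares the order-ideal part easy and cites the proof of \cite[Prop.~II.2.3]{Harmand-Werner-Werner} for the $M$-ideal part, which is the same truncation argument.
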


\begin{proof}
It is easy to see that $c_0(E_n)$ is an order ideal in $c(E_n)$.
Moreover, it is well-known that $c_0(E_n)$ is an $M$-ideal in $c(E_n)$:
for example, see the proof of  \cite[Prop. II.2.3]{Harmand-Werner-Werner}.
Now Theorem \ref{thm-order-M-ideal} gives the desired result.
\end{proof}

The following theorem shows that an order $M$-projection on a Banach lattice $Y$ automatically induces an $M$-projection on a space of $Y$-valued regular operators. Compare to \cite[Lemma VI.1.1]{Harmand-Werner-Werner}.

\begin{theorem}\label{thm-M-projection-on-Lr}
Let $X$ and $Y$ be Banach lattices, and suppose $P : Y \to Y$ is an $M$-projection such that $0 \le P \le Id_Y$.
Then $\tilde{P} : S \mapsto P \circ S$ is an $M$-projection on $\Lr(X,Y)$.
\end{theorem}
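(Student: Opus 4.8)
The plan is to verify directly that $\tilde P$ satisfies the defining identity of an $M$-projection on $\Lr(X,Y)$, namely $\n{S}_r = \max\{\n{\tilde P S}_r, \n{S-\tilde P S}_r\}$ for every $S \in \Lr(X,Y)$. First I would record the cheap structural facts: since $P^2=P$ and $P$ is positive, $\tilde P$ is a linear projection mapping $\Lr(X,Y)$ into itself (if $S = S_1 - S_2$ with $S_i \in \L_+(X,Y)$ then $P\circ S = P\circ S_1 - P\circ S_2$ is again regular), and since $P$ is an $M$-projection we have $\n{P} \le 1$ and $\n{Id_Y - P} \le 1$. Writing $A = \tilde P S = P\circ S$ and $B = S - \tilde P S = (Id_Y - P)\circ S$, I note that $A$ takes its values in the band $\ran P$ and $B$ in the complementary band $\ran(Id_Y - P)$, and that $P\circ A = A$ and $(Id_Y - P)\circ B = B$.

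Everything then rests on one pointwise computation for positive operators. Using the standard fact that the norm of a positive operator is attained on the positive cone, $\n{R} = \sup\{\n{Rx} \st x \in X_+, \n{x}\le 1\}$, I observe that for $x \in X_+$ and $R \ge 0$ the vector $Rx$ is positive, so the $M$-projection identity gives $\n{Rx} = \max\{\n{(P\circ R)x}, \n{((Id_Y - P)\circ R)x}\}$. Taking the supremum over the positive unit ball and interchanging $\sup$ with $\max$ yields, for every $R \in \L_+(X,Y)$,
$$
\n{R} = \max\big\{ \n{P\circ R},\ \n{(Id_Y - P)\circ R} \big\}.
$$
With this in hand the inequality $\n{S}_r \ge \max\{\n{A}_r, \n{B}_r\}$ is immediate: any $R \in \L_+(X,Y)$ with $\pm S \le R$ satisfies $\pm A \le P\circ R$ and $\pm B \le (Id_Y - P)\circ R$ by positivity of $P$ and $Id_Y - P$ together with $P\circ A = A$, $(Id_Y - P)\circ B = B$, so $\max\{\n{A}_r,\n{B}_r\} \le \max\{\n{P\circ R}, \n{(Id_Y-P)\circ R}\} = \n{R}$; taking the infimum over such $R$ gives the claim.

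For the reverse inequality $\n{S}_r \le \max\{\n{A}_r, \n{B}_r\}$ I would glue two dominating operators into one. Fixing $\eps>0$, choose $R_1, R_2 \in \L_+(X,Y)$ with $\pm A \le R_1$, $\pm B \le R_2$ and $\n{R_i}$ within $\eps$ of $\n{A}_r$, $\n{B}_r$, and set $R = P\circ R_1 + (Id_Y - P)\circ R_2 \ge 0$. Using $P\circ A = A$ and $(Id_Y - P)\circ B = B$ one checks $\pm A \le P\circ R_1$ and $\pm B \le (Id_Y - P)\circ R_2$, and adding gives $\pm S \le R$. Since $P\circ R_1$ and $(Id_Y - P)\circ R_2$ take values in complementary bands, applying $P$ and $Id_Y - P$ to $Rx$ (for $x\in X_+$) and invoking the pointwise $M$-identity shows $\n{R} = \max\{\n{P\circ R_1}, \n{(Id_Y - P)\circ R_2}\} \le \max\{\n{R_1},\n{R_2}\} \le \max\{\n{A}_r, \n{B}_r\} + \eps$. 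Letting $\eps \to 0$ finishes the argument, and combining the two inequalities gives the $M$-projection identity.

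The main obstacle — and the reason one cannot simply quote the isometry $\n{S}_r = \n{|S|}$ and a lattice calculation — is that $Y$ is not assumed Dedekind complete, so $\Lr(X,Y)$ need not be a Banach lattice and $|S|$ may fail to exist. Every estimate must therefore be carried out directly with the infimum definition of $\n{\cdot}_r$. The technical heart is that combining the $M$-projection property with the order-projection (projection band) structure forces dominating positive operators to split as an $\ell_\infty$-type sum across the two complementary bands, which is precisely what produces the maximum; keeping track of the reductions $P\circ A = A$ and $(Id_Y - P)\circ B = B$ is what allows the dominating operators to be transported into the correct bands without increasing their norms.
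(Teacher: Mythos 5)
Your proof is correct; every step checks out, and it is worth recording how it differs from the paper's. The paper never verifies the identity $\n{S}_r = \max\{\n{\tilde{P}S}_r, \n{S-\tilde{P}S}_r\}$ directly: it instead checks the equivalent one-inequality criterion from the remark on \cite[p. 2]{Harmand-Werner-Werner}, namely $\bign{\tilde{P}V + (Id-\tilde{P})U}_r \le \max\{\n{V}_r,\n{U}_r\}$ for arbitrary $U,V \in \Lr(X,Y)$, by choosing positive operators $V_\eps, U_\eps$ dominating $V,U$, gluing them into the positive operator $PV_\eps + (Id_Y-P)U_\eps$, and bounding the operator norm of this glued operator by citing \cite[Lemma VI.1.1]{Harmand-Werner-Werner} (the Banach-space statement that $S \mapsto P\circ S$ is an $M$-projection on $\L(X,Y)$). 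Your gluing $R = P\circ R_1 + (Id_Y-P)\circ R_2$ and your transport of dominating operators through $P$ and $Id_Y-P$ are exactly the same devices, so the technical heart coincides; what differs is the packaging. You work from the definition, proving both inequalities of the $M$-identity, and you replace the citation of Lemma VI.1.1 by a direct proof of the only case you need --- positive operators --- via the fact that the norm of a positive operator is attained on the positive cone. This makes your argument self-contained and isolates the observation that for positive operators the operator-norm $M$-identity is an elementary lattice computation; the paper's proof is shorter because it outsources both the equivalence and that identity to the literature. Two cosmetic remarks: positivity of $Rx$ is never actually needed when you invoke the $M$-identity in $Y$ (it holds for every vector), and the appeal to ``complementary bands'' in your last step can be replaced by the purely algebraic identities $P\circ R = P\circ R_1$ and $(Id_Y-P)\circ R = (Id_Y-P)\circ R_2$, which follow from $P^2=P$ and require no order structure at all.
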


\begin{proof}
Let $U,V \in \Lr(X,Y)$. Given $\varepsilon>0$, there exist $U_\varepsilon, V_\varepsilon \in \L(X,Y)_+$ such that $\n{U_\varepsilon} \le \n{U}_r+\varepsilon$, 
$\n{V_\varepsilon} \le \n{V}_r+\varepsilon$, and for each $x \in X$,
$$
|Ux| \le U_\varepsilon|x|, \quad |Vx| \le V_\varepsilon|x|.
$$
Therefore, for each $x \in X$ we have
\begin{multline*}
|PVx + (Id_Y-P)Ux| \le |PVx| + |(Id_Y-P)Ux| \le P|Vx| + (Id_Y - P)|Ux| \\
\le PV_\varepsilon|x| + (Id_Y-P)U_\varepsilon|x| = \big[ PV_\varepsilon + (Id_Y-P)U_\varepsilon \big] |x|.
\end{multline*}
Note that $PV_\varepsilon + (Id_Y-P)U_\varepsilon \in \L(X,Y)_+$ and, since $S \mapsto P \circ S$ is an $M$-projection on $\L(X,Y)$ \cite[Lemma VI.1.1]{Harmand-Werner-Werner},
$$
\n{PV_\varepsilon + (Id_Y-P)U_\varepsilon} \le \max\{ \n{V_\varepsilon}, \n{U_\varepsilon} \} \le \max\{ \n{V}_r, \n{U}_r \} + \varepsilon,
$$
and therefore
$$
\big\| \tilde{P}V + (Id_Y-\tilde{P})U \big\|_r \le  \max\{ \n{V}_r, \n{U}_r \}.
$$
By the remark on \cite[p. 2]{Harmand-Werner-Werner}, this implies that $\tilde{P}$ is an $M$-projection on $\Lr(X,Y)$.
\end{proof}

\begin{remark}
If in addition $Y$ is Dedekind complete, in which case $\Lr(X,Y)$ is a Banach lattice, then the projection $\tilde{P}$ is in fact an order $M$-projection on $\Lr(X,Y)$; see 
\cite[Exercise 1.3.11.(a)]{Aliprantis-Burkinshaw}.
\end{remark}

%------------------------------------------------------------------
%------------------------------------------------------------------
%------------------------------------------------------------------
\section{Local reflexivity for lattices, \`a la Dean}\label{sec-LR-a-la-Dean}

Dean's version of the Principle of Local Reflexivity \cite{Dean73} asserts that when $E$ and $X$ are Banach spaces with $E$ finite-dimensional, then $\L(E;X)^{**} \cong \L(E;X^{**})$ with the identification given by
\begin{equation}\label{eqn-dean-identification}
\varphi \mapsto \big[ \tilde{\varphi} :   e \mapsto \big[ x^* \mapsto   \varphi( e \otimes x^* ) \big] \big].
\end{equation}

We will need a version of this identity for Banach lattices, which is probably folklore but we have been unable to find it explicitly stated in the literature.

\begin{proposition}\label{prop-PLRL-a-la-Dean}
Let $E$ and $X$ be Banach lattices and suppose $E$ is finite-dimensional. Then  $\Lr(E,X)^{**} \equiv \Lr(E,X^{**})$, with the identification given by \eqref{eqn-dean-identification}
\end{proposition}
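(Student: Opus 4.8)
The plan is to realize the isomorphism \eqref{eqn-dean-identification} as (the adjoint of) one concrete map and thereby reduce everything to a single isometry question. Let $\iota : X \to X^{**}$ denote the canonical embedding, and consider the natural pairing between $E \otimes_{|\pi|} X^*$ and $\Lr(E,X)$ given on elementary tensors by $\langle e \otimes x^*, T \rangle = (Te)(x^*)$, with induced map $j : E \otimes_{|\pi|} X^* \to \Lr(E,X)^*$. For $u \in E \otimes_{|\pi|} X^*$ and $T \in \Lr(E,X)$ one computes $j(u)(T) = \big(\Psi(\iota \circ T)\big)(u)$, where $\Psi : \Lr(E,X^{**}) \to (E \otimes_{|\pi|} X^*)^*$ is the Fremlin duality isomorphism from Section \ref{sec-notation}. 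A short diagram chase using this formula shows that the composite $\Psi^{-1} \circ j^*$, with $j^* : \Lr(E,X)^{**} \to (E \otimes_{|\pi|} X^*)^* \equiv \Lr(E,X^{**})$, is exactly the map \eqref{eqn-dean-identification}. Since the adjoint of a surjective lattice isometry between Banach lattices is again one, the entire statement reduces to showing that $j$ is a surjective isometric lattice isomorphism onto $\Lr(E,X)^*$.

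Here I would exploit that $E$, being a finite-dimensional Banach lattice, is lattice isometric to $\R^n$ with the coordinatewise order for some lattice norm; let $e_1,\dots,e_n$ be its atoms. Then $T \mapsto (Te_i)_i$ identifies $\Lr(E,X)$ with $X^n$ as a vector space, the modulus $|T|$ always existing because the atoms force the defining suprema to run over scalar multiples only, while $E \otimes_{|\pi|} X^* \cong (X^*)^n$. Under these identifications $j$ is the canonical duality pairing $\langle (x_i^*)_i, (y_i)_i \rangle = \sum_i x_i^*(y_i)$ of the finite product $(X^*)^n = (X^n)^*$, hence automatically a linear bijection onto $\Lr(E,X)^*$. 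Testing against operators carried by a single atom shows a functional $\sum_i x_i^*(\,\cdot\, e_i)$ is positive exactly when each $x_i^* \ge 0$, that is, exactly when $\sum_i e_i \otimes x_i^* \in C_p$; thus $j$ is bipositive, and as its domain is a Banach lattice, $j$ becomes a surjective lattice isomorphism the moment it is shown to be isometric. Everything now rests on the identity $\n{j(u)} = \n{u}_{|\pi|}$.

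One inequality is free: the formula $j(u)(T) = \big(\Psi(\iota \circ T)\big)(u)$ together with $\n{\iota \circ T}_r = \n{T}_r$ (again using that $|T|$ exists and that $\iota$ is a lattice isometry) gives $\n{j(u)} \le \n{u}_{|\pi|}$. The reverse inequality is a statement of local reflexivity: since $\n{u}_{|\pi|} = \sup\{\langle R, u\rangle : R \in \Lr(E,X^{**}),\ \n{R}_r \le 1\}$ while $\n{j(u)} = \sup\{\langle \iota \circ T, u\rangle : T \in \Lr(E,X),\ \n{T}_r \le 1\}$, I must show the regular-norm unit ball of $\Lr(E,X)$, embedded via $\iota$, is weak$^*$-dense in that of $\Lr(E,X^{**})$. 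Fixing $R$ with $\n{R}_r \le 1$, choose a positive dominator $S \ge 0$ with $\pm R \le S$ and $\n{S} \le 1+\eps$, and pass to the atoms: with $\eta_i = Re_i$ and $\sigma_i = Se_i$ the domination becomes $p_i := \sigma_i + \eta_i \ge 0$ and $q_i := \sigma_i - \eta_i \ge 0$ in the Dedekind complete lattice $X^{**}$. If one approximates the $p_i,q_i$ by positive elements $\tilde p_i,\tilde q_i \in X$ and sets $x_i = (\tilde p_i - \tilde q_i)/2$, the resulting $T$ has natural dominator $(\tilde p_i + \tilde q_i)/2 \ge |x_i|$ automatically positive, so domination is preserved for free, and matching $\langle \iota \circ T, u\rangle$ with $\langle R, u\rangle$ only requires accuracy against the finitely many functionals occurring in $u$.

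The main obstacle is to perform this approximation while keeping $\n{T}_r \le 1+\eps$, i.e. controlling the operator norm of the dominator $(\tilde p_i + \tilde q_i)/2$, a quantity that couples all coordinates and is merely weak$^*$-lower-semicontinuous. This is a genuinely lattice-theoretic local reflexivity step, not a consequence of the classical Dean principle, because positivity and the domination relation must be respected. I would first reduce the coupled norm to finitely many tests using compactness of the positive part of the unit ball of the finite-dimensional $E$, so that $\n{(s_i)_i}_r$ is controlled through $\n{\sum_i a_i s_i}$ for finitely many $a \in E_+$, and then invoke a positive Goldstine theorem (the norm-bounded positive cone of $X$ is weak$^*$-dense in that of $X^{**}$) upgraded to a \emph{simultaneous}, norm-controlled approximation of the finitely many positive combinations $\sum_i a_i^{(l)} p_i$ and $\sum_i a_i^{(l)} q_i$. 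It is exactly at this point that Dedekind completeness and the approximation of finite-dimensional subspaces by finite-dimensional sublattices (Lemma \ref{lemma-LO}) enter, allowing these finitely many positive elements of $X^{**}$ to be treated inside an almost-finite-dimensional sublattice in which the simultaneous positive local reflexivity can be executed.
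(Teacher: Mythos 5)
Your reduction retraces the paper's own route, just with the citations unpacked: the assertion that the canonical pairing $j : E \otimes_{|\pi|} X^* \to \Lr(E,X)^*$ is a surjective isometric lattice isomorphism is precisely \cite[Lemma 3.4]{Blanco}, which the paper simply quotes, and composing the adjoint of $j$ with the Fremlin duality $\Psi$ is exactly how the paper then deduces $\Lr(E,X)^{**} \equiv \Lr(E,X^{**})$. Your structural preliminaries are fine and elementary: the atomic description of a finite-dimensional Banach lattice, the bijectivity and bipositivity of $j$, the identification of the composite $\Psi^{-1}\circ j^*$ with \eqref{eqn-dean-identification}, and the inequality $\n{j(u)} \le \n{u}_{|\pi|}$. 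But the reverse inequality, which you correctly isolate as the heart of the matter, is never actually proved: it is only described, and the description has a genuine gap.

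Concretely, what you need is: for every $R \in \Lr(E,X^{**})$ with $\n{R}_r \le 1$, every $\eps>0$, and every finite set of functionals $x_1^*,\dots,x_n^* \in X^*$, there exists $T \in \Lr(E,X)$ with $\n{T}_r \le 1+\eps$ such that $x_i^*(Te_i)$ approximates $(Re_i)(x_i^*)$ for each $i$. This is a positive principle of local reflexivity with control of the \emph{regular} norm. The ``positive Goldstine'' theorem (weak$^*$-density of the positive part of the unit ball of $X$ in that of $X^{**}$) lets you approximate each $p_i$ and $q_i$ individually, but the constraint you must preserve, namely $\sup\bigl\{ \bigl\| \sum_i a_i \tilde\sigma_i \bigr\| : a \in E_+,\ \n{a} \le 1 \bigr\} \le 1+\eps$ with $\tilde\sigma_i = (\tilde p_i + \tilde q_i)/2$, couples all coordinates at once, and the norm is only weak$^*$-lower semicontinuous, so weak$^*$-approximants can violate it badly. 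Overcoming this simultaneous, norm-controlled approximation while also keeping $\tilde p_i, \tilde q_i \ge 0$ is exactly the Helly-type content of any local reflexivity principle; it does not follow from Goldstine plus compactness. Nor can Lemma \ref{lemma-LO} fill the hole: that lemma places a finite-dimensional subspace of a Dedekind complete lattice almost inside a finite-dimensional sublattice of the \emph{same} lattice, and provides no mechanism for descending from $X^{**}$ to $X$ (note also that $X$ is not assumed Dedekind complete in the proposition, so the lemma is at best available inside $X^{**}$, where it produces no elements of $X$). Either supply a genuine proof of this positive local reflexivity step, or do what the paper does and invoke \cite[Lemma 3.4]{Blanco}.
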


\begin{proof}
From  \cite[Lemma 3.4]{Blanco} we have that $\Lr(E,X)^* \equiv E \otimes_{|\pi|} X^*$.
Taking the dual,  the desired result follows from the basic properties of the projective Fremlin tensor product (i.e. identifying its dual with a space of regular operators, see Section \ref{sec-notation}).
\end{proof}

%------------------------------------------------------------------
%------------------------------------------------------------------
%------------------------------------------------------------------
\section{Some preparatory results}\label{sec-preparatory}

In this section we collect various preparatory technical results that will be used in the prof of our first main theorem.
Our approach is inspired by that of Choi and Effros in \cite{Choi-Effros}.
The following is a Banach lattice version of  \cite[Lemma 2.4]{Choi-Effros}, and is sort of a dual version of Theorem \ref{thm-M-projection-on-Lr}.

\begin{lemma}\label{lemma-tensor-product-of-L-summand}
Suppose that $X$ and $Y$ are Banach lattices and that $Y_0 \subseteq Y$ is a Banach sublattice such that there exists a positive contractive projection $P$ from $Y$ onto $Y_0$.
If $\iota : Y_0 \to Y$ is the inclusion map, then
$$
Id_X \otimes \iota : X \otimes_{|\pi|} Y_0 \to X \otimes_{|\pi|} Y
$$
is an isometry onto its range.
If additionally $Y_0$ is an order $L$-summand in $Y$ and $P$ is its associated order $L$-projection, then
the range of $Id_X \otimes \iota$ is an order $L$-summand in $X \otimes_{|\pi|} Y$ with associated $L$-projection $Id_X \otimes P$.
\end{lemma}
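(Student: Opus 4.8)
The plan is to deduce everything from the positivity and norm estimate for the $|\pi|$ tensor product recalled in Section \ref{sec-notation}, reserving the $L$-projection property, which is the only delicate point, for a duality argument resting on Theorem \ref{thm-M-projection-on-Lr}. Throughout, write $P_0 : Y \to Y_0$ for the corestriction of $P$, so that $P = \iota \circ P_0$, $P_0 \circ \iota = Id_{Y_0}$, and $P_0$ is again a positive contraction. For the first assertion, since the $|\pi|$ tensor product of two positive contractions is a positive contraction, both $Id_X \otimes \iota$ and $Id_X \otimes P_0$ are positive and contractive, and $(Id_X \otimes P_0)(Id_X \otimes \iota) = Id_X \otimes (P_0 \circ \iota) = Id_{X \otimes_{|\pi|} Y_0}$. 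Hence for every $u \in X \otimes_{|\pi|} Y_0$ we have $\n u \le \n{(Id_X \otimes \iota) u} \le \n u$, so $Id_X \otimes \iota$ is an isometry onto its range.

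Assume now that $P$ is the order $L$-projection onto $Y_0$, so that $0 \le P \le Id_Y$ together with $\n y = \n{Py} + \n{y - Py}$, and set $\tilde P = Id_X \otimes P$. Then $\tilde P^2 = \tilde P$, while $\tilde P \ge 0$ and $Id - \tilde P = Id_X \otimes (Id_Y - P) \ge 0$ because $Id_Y - P \ge 0$; thus $0 \le \tilde P \le Id$, i.e.\ $\tilde P$ is an order projection. From $\tilde P = (Id_X \otimes \iota)(Id_X \otimes P_0)$ we get $\ran \tilde P \subseteq \ran(Id_X \otimes \iota)$, while $\tilde P \, (Id_X \otimes \iota) = Id_X \otimes (P \circ \iota) = Id_X \otimes \iota$ gives the reverse inclusion; hence $\ran \tilde P = \ran(Id_X \otimes \iota)$.

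It remains to show that $\tilde P$ is an $L$-projection, and this is the crux. Working directly with the norm $\n{\cdot}_{|\pi|}$ is awkward because of the infimum in its definition, so instead I would pass to the dual and use that a projection is an $L$-projection exactly when its adjoint is an $M$-projection \cite[Ch.~I]{Harmand-Werner-Werner}. Under the identification $(X \otimes_{|\pi|} Y)^* \equiv \Lr(X, Y^*)$, a computation on elementary tensors shows that $\tilde P^*$ corresponds to the map $S \mapsto P^* \circ S$. Since $P$ is an order $L$-projection, its adjoint $P^*$ is an $M$-projection on $Y^*$, and $0 \le P \le Id_Y$ yields $0 \le P^* \le Id_{Y^*}$; thus $P^*$ is an order $M$-projection. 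Theorem \ref{thm-M-projection-on-Lr}, applied to the lattices $X$ and $Y^*$, then states precisely that $S \mapsto P^* \circ S$ is an $M$-projection on $\Lr(X, Y^*)$. Therefore $\tilde P^*$ is an $M$-projection, so $\tilde P$ is an $L$-projection.

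Being also an order projection, $\tilde P = Id_X \otimes P$ is then an order $L$-projection, and its range $\ran(Id_X \otimes \iota)$ is the asserted order $L$-summand. This $L$-projection step is the main obstacle; should one prefer to avoid quoting the $L$-$M$ duality, the direction needed here (adjoint an $M$-projection forces the original to be an $L$-projection) follows in two lines: given $u$, choose norming functionals $\varphi_1, \varphi_2$ for $\tilde P u$ and $(Id - \tilde P)u$ and evaluate $u$ against $\tilde P^* \varphi_1 + (Id - \tilde P^*)\varphi_2$, whose norm is at most $1$ precisely because $\tilde P^*$ is an $M$-projection.
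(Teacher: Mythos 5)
Your proof is correct, and while the first half (the isometry, the order-projection bounds $0 \le Id_X \otimes P \le Id$, and the identification of $\ran(Id_X \otimes P)$ with $\ran(Id_X \otimes \iota)$) matches the paper's argument, you handle the crux --- that $Id_X \otimes P$ is an $L$-projection --- by a genuinely different route. The paper works directly with the definition of the Fremlin norm: given $u \in X \otimes Y$ and positive $x_j, y_j$ with $\sum_j x_j \otimes y_j \pm u \in C_p$, positivity of $P$ and $Id_Y - P$ produces dominating representations for $(Id_X \otimes P)u$ and for $u - (Id_X \otimes P)u$, and the pointwise $L$-equality $\n{y_j} = \n{Py_j} + \n{(Id_Y - P)y_j}$ yields $\n{u}_{|\pi|} \ge \n{(Id_X \otimes P)u}_{|\pi|} + \n{u - (Id_X \otimes P)u}_{|\pi|}$ after taking the infimum over representations. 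You instead dualize: under $(X \otimes_{|\pi|} Y)^* \equiv \Lr(X,Y^*)$ the adjoint of $Id_X \otimes P$ becomes $S \mapsto P^* \circ S$, the operator $P^*$ is an order $M$-projection on $Y^*$, and Theorem \ref{thm-M-projection-on-Lr} together with the $L$-$M$ projection duality finishes the job. This is legitimate and non-circular, since Theorem \ref{thm-M-projection-on-Lr} is proved in Section \ref{sec-order-M-ideals} with no reference to this lemma; in effect you have made literal the paper's own informal remark that the lemma is ``sort of a dual version'' of that theorem. The trade-off: the paper's computation is self-contained and elementary, using nothing beyond the cone $C_p$ and the norm formula (the passage from the algebraic tensor product to its completion being automatic), whereas your argument is shorter and structurally cleaner but leans on the dual identification of the Fremlin tensor product and on the fact that a projection whose adjoint is an $M$-projection must itself be an $L$-projection --- a step you prudently back up with the elementary norming-functional argument, which is indeed all that is needed in the direction you use.
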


\begin{proof}
Notice that both
$$
Id_X \otimes P : X \otimes_{|\pi|} Y \to X \otimes_{|\pi|} Y_0 \qquad \text{and} \qquad Id_X \otimes \iota : X \otimes_{|\pi|} Y_0 \to X \otimes_{|\pi|} Y
$$
are positive contractions, because so are $Id_X$, $P$ and $\iota$.
Since $(Id_X \otimes P) (Id_X \otimes \iota) = Id_X \otimes Id_{Y_0}$, it follows that $Id_X \otimes \iota$ is an isometry onto its range, and $X \otimes_{|\pi|} Y_0$ can be regarded as a subspace of $X \otimes_{|\pi|} Y$.

Now assume additionally that $P$ is an order $L$-projection.
Let $u \in X \otimes Y$, and let $x_j \in X_+$, $y_j \in Y_+$ satisfy $\sum x_j \otimes y_j \pm u \in C_p$, where $C_p \subset X \otimes Y$ is the projective cone.
Note that since both $P$ and $Id_Y-P$ are positive, we have
$$
\sum x_j \otimes Py_j \pm (Id_X \otimes P)u \in C_p, \quad \sum x_j \otimes (Id_Y-P)y_j \pm (Id_X \otimes (Id_Y-P))u \in C_p.
$$
Therefore,
\begin{multline*}
\sum \n{x_j} \cdot \n{y_j} = \sum \n{x_j} \cdot \n{Py_j} +  \sum \n{x_j} \cdot \n{(Id_Y-P)y_j} \\
 \ge \n{ (Id_X \otimes P)u }_{|\pi|} + \n{u - (Id_X \otimes P)u}_{|\pi|},
\end{multline*}
and hence $Id_X \otimes P$ is an $L$-projection.
Since $0 \le P \le Id_Y$, it follows that $0 \le Id_X \otimes P \le Id_X \otimes Id_Y$, meaning that $Id_X \otimes P$ is in fact an order $L$-projection.
\end{proof}

The following is a lattice version of  \cite[Lemma 2.5]{Choi-Effros}.
It states that if we restrict our attention to finite-dimensional domains, 
and we start with a map which is almost a lifting on a vector sublattice, we can obtain an actual lifting on the entire domain by doing a small perturbation (and we have control on the size of the perturbation).
A bit of notation: if $K$ is a convex subset of a topological vector space containing $0$, we denote by $\Aff_0(K)$ the Banach subspace of affine functions in $C(K)$ vanishing at $0$.
If $X$ is a Banach space and $K$ is the unit ball of $X^*$ endowed with the weak$^*$ topology, then there is a natural isometry $X \cong \Aff_0(K)$.
 
\begin{lemma}\label{lemma-CE}
Suppose that $J$ is an order $M$-ideal in a Banach lattice $X$.
Let $F \subseteq E$ be finite-dimensional Banach lattices, and let $T : E \to X/J$ be a regular 
map with $\n{T}_r \le 1$.
Given $\eps>0$,
if there exists  
$L : E \to X$ such that $\n{L}_r \le 1$ and $\n{\restr{(q \circ L - T)}{F}}_r \le \eps$,
then there exists $\tilde{L} : E \to X$ such that $T = q \circ \tilde{L}$, $\bign{ \tilde{L} }_r \le 1$ and $\n{\restr{(\tilde{L}-L)}{F}}_r \le 6\eps$.
\end{lemma}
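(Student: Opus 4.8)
The plan is to lift the whole problem up to the operator level, where the subspace of $J$-valued maps becomes an order $M$-ideal, and then to exploit the ball-intersection geometry of $M$-ideals. First I would record that $\Lr(E,J)$ is an order $M$-ideal in $\Lr(E,X)$ (and, identically, $\Lr(F,J)$ in $\Lr(F,X)$). This comes out by duality: Proposition \ref{prop-PLRL-a-la-Dean}, equivalently \cite[Lemma 3.4]{Blanco}, gives $\Lr(E,X)^* \equiv E \otimes_{|\pi|} X^*$, and under this identification the annihilator of $\Lr(E,J)$ is $E \otimes_{|\pi|} J^\perp$, since $\Lr(E,J)$ is the kernel of the map $q_* : \Lr(E,X) \to \Lr(E,X/J)$, $L \mapsto q\circ L$, and $(X/J)^* \equiv J^\perp$ as Banach lattices. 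Because $J$ is an order $M$-ideal, $J^\perp$ is an order $L$-summand of $X^*$, so Lemma \ref{lemma-tensor-product-of-L-summand} (with first factor $E$, second factor $X^*$, and $Y_0 = J^\perp$) shows $E \otimes_{|\pi|} J^\perp$ to be an order $L$-summand of $E \otimes_{|\pi|} X^*$. Thus $\Lr(E,J)$ is an order $M$-ideal of $\Lr(E,X)$; the crucial point is that this holds in the \emph{regular} norm, so the regularity requirement is absorbed once and for all into the choice of ambient space, and all later estimates are automatically regular-norm estimates.

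With this reduction, the lemma becomes a purely $M$-ideal-theoretic statement inside $(\Lr(E,X),\n{\cdot}_r)$: find an exact lifting $\tilde L$ of $T$ through $q_*$ with $\n{\tilde L}_r \le 1$ that stays close to $L$ on $F$. I would proceed in two stages. In the first I use that $q_*$ is a metric surjection in the regular norm -- this is the dual of the isometric embedding in Lemma \ref{lemma-tensor-product-of-L-summand}, and rests on the projectivity of the Fremlin tensor product \cite[Thm. 5.2]{Labuschagne} applied to the almost-interval-preserving metric surjection $q : X \to X/J$ -- to correct $L$ on $F$. Since $\n{(q_* L - T)|_F}_r \le \eps$, there is $G : F \to X$ with $q \circ G = (T - q_* L)|_F$ and $\n{G}_r$ of order $\eps$, so that $L|_F + G$ is an exact lifting of $T|_F$ within $\eps$-order of $L|_F$.

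The heart of the argument is the second stage: promoting this to an exact lifting of $T$ on all of $E$ while keeping the regular norm at most $1$ and the deviation on $F$ controlled. Here I would run the restricted three-ball property of the $M$-ideal $\Lr(E,J) \subseteq \Lr(E,X)$ (a standard characterization of $M$-ideals, see \cite{Harmand-Werner-Werner}) as a successive-approximation scheme. Starting from a lifting of $T$ of regular norm near $\n{T}_r \le 1$, each step uses the three-ball property to simultaneously (i) reduce the remaining lifting defect, (ii) keep the regular norm within a summable increment of $1$, and (iii) perturb the restriction to $F$ by only an $\eps$-order amount; the restriction operator $\Lr(E,\cdot)\to\Lr(F,\cdot)$ carries $\Lr(E,J)$ into $\Lr(F,J)$ compatibly with the order $L$-projections, which is what lets (iii) be maintained throughout. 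Passing to the limit yields $\tilde L$ with $q_* \tilde L = T$ exactly and $\n{\tilde L}_r \le 1$, and summing the geometric series of the per-step $F$-perturbations produces the stated bound $\n{(\tilde L - L)|_F}_r \le 6\eps$.

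The main obstacle is precisely this simultaneous control in the second stage. An unconstrained exact lifting of regular norm $\le 1$ is immediate from metric surjectivity, and approximating $L$ by a lifting on $F$ alone is easy; but reconciling exactness on \emph{all} of $E$ with norm $\le 1$ and near-agreement with $L$ on the subspace $F$ is exactly the phenomenon that $M$-ideals, as opposed to mere quotient maps, are built to handle, and keeping the book-keeping of the three constraints consistent across the iteration is where the real work lies. A secondary point needing care is that every operator produced be genuinely regular with the claimed regular-norm bound; this is guaranteed by working throughout inside $\Lr(\cdot,\cdot)$ with its regular norm, the metric-surjection steps relying on Fremlin projectivity, which in turn uses that $J$ is an order ideal so that $q$ is almost interval preserving.
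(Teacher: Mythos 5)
Your preparatory reductions are sound, and they in fact coincide with the paper's own setup: the duality $\Lr(E,X)^* \equiv E \otimes_{|\pi|} X^*$ from \cite[Lemma 3.4]{Blanco} together with Lemma \ref{lemma-tensor-product-of-L-summand} is exactly how the paper transports the order-$M$-ideal structure of $J$ to the operator level, and the conclusion of your stage 1 is correct (though the cleaner justification is that the adjoint of $S \mapsto q\circ S$ is $Id_F \otimes q^*$, an isometry by Lemma \ref{lemma-tensor-product-of-L-summand}, while surjectivity is automatic for finite-dimensional domains; Fremlin projectivity by itself dualizes to the wrong kind of statement). The gap is that your stage 2 --- which is the entire content of the lemma --- is asserted rather than proved, and the way you have set it up it cannot be carried out under the stated hypotheses. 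After stage 1 you hold an exact lifting $M = \restr{L}{F} + G$ of $\restr{T}{F}$ defined only on $F$, and stage 2 now demands an exact lifting of $T$ on all of $E$, of regular norm at most $1$, whose restriction to $F$ is within $O(\eps)$ of $M$. That is precisely the statement of Lemma \ref{lemma-pre-Ando-Choi-Effros}, which the paper is able to prove only under the \emph{additional} hypothesis that $J$ satisfies Cartwright's property (C): its proof must extend the $J^{\perp\perp}$-component of the given lifting from $F$ to $E$ without increasing the regular norm, and this uses the injectivity of $J^{\perp\perp}$ via \cite[Thm. 2.2]{Arendt}, just as the Banach-space analogue \cite[Lemma II.2.4]{Harmand-Werner-Werner} needs $J$ to be an $L_1$-predual. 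A general order $M$-ideal provides no such extension principle, and nothing in your sketch substitutes for it. Notice where you lost ground: the hypothesis of Lemma \ref{lemma-CE} hands you an approximate lifting $L$ defined on \emph{all} of $E$ with $\n{L}_r \le 1$, and this global datum is exactly what allows one to avoid any extension step; your stage 1 discards it, keeping only $\restr{L}{F}$.

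The deeper reason the three-ball iteration cannot simply be ``run'' as described is that the restricted three-ball property of the single $M$-ideal $\Lr(E,J) \subseteq \Lr(E,X)$ contains no information about the restriction map to $F$: your constraints (i) and (ii) live in $\Lr(E,X)$, but constraint (iii) lives in $\Lr(F,X)$, and the three balls have no slot in which to encode it. What replaces this vague ``compatibility'' in the paper is a genuinely relative statement: after dualizing, one has the unit balls $K \subseteq E\otimes_{|\pi|}X^*$ and $D = (\iota \otimes Id_{X^*})\tilde{D}$, the weak$^*$-closed $L$-summand $W$ with projection $Id_E \otimes P$, and --- this is the nontrivial verification --- the fact that $Id_E \otimes P$ maps $D$ \emph{onto} $D \cap H$. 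These are exactly the hypotheses of \cite[Lemma 2.1]{Choi-Effros}, which is then applied as a black box; the constant $6$ comes from that lemma, not from a geometric series one is free to design. Finally, a one-step correction (or a naive iteration) fails for a structural reason your sketch never confronts: for a genuine $M$-ideal, as opposed to an $M$-summand, the $L$-projection $P$ onto $J^\perp$, and hence $Id_E\otimes P$, is \emph{not} weak$^*$-continuous, so the natural candidate corrections built from it do not correspond to elements of $\Lr(E,X)$; coping with this failure of continuity is precisely what the hypotheses and the proof of \cite[Lemma 2.1]{Choi-Effros} are engineered for.
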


\begin{proof}
Let $\iota : F \to E$ be the inclusion map.
Consider the commutative diagram
\begin{equation}\label{eqn-lemma-CE-diagram-regular}
\xymatrix{
\Lr(F,X/J) & \Lr(E,X/J) \ar[l]_{\circ \iota} \\
\Lr(F,X) \ar[u]^{q \circ } & \Lr(E,X) \ar[l]_{\circ \iota} \ar[u]^{q \circ }\\
}
\end{equation}
Taking adjoints, using the identification from \cite[Lemma 3.4]{Blanco}
\begin{equation}\label{eqn-lemma-CE-diagram-tensors}
\xymatrix{
F \otimes_{|\pi|}(X/J)^* \ar[r]^{\iota \otimes Id_{(X/J)^*}}  \ar[d]^{Id_F \otimes q^*} & E \otimes_{|\pi|} (X/J)^*  \ar[d]^{Id_E \otimes q^*} \\
F\otimes_{|\pi|}X^*\ar[r]^{\iota \otimes Id_{X^*}}  & E \otimes_{|\pi|} X^* \\
}
\end{equation}
Since $q^* : (X/J)^* \to X^*$ is an isometric lattice isomorphism onto a sublattice of $X^*$ which is positively $1$-complemented, it follows from Lemma \ref{lemma-tensor-product-of-L-summand} that both vertical arrows in the diagram \eqref{eqn-lemma-CE-diagram-tensors} are isometric lattice isomorphisms onto their images.
Let $K$ (resp. $\tilde{D}$) be the closed unit ball in $E \otimes_{|\pi|} X^*$ (resp. $F \otimes_{|\pi|} X^*$), and let $D = (\iota \otimes Id_{X^*})\tilde{D}$.
Since $\tilde{D}$ is weak$^*$-compact and $\iota \otimes Id_{X^*}$ is weak$^*$-to-weak$^*$ continuous, 
we see that $D$ is a weak$^*$-closed,  convex, symmetric subset of $K$.

Let $P : X^* \to X^*$ be the $L$-projection onto $J^\perp$, and let
$$
W = (Id_E \otimes P)(E \otimes_{|\pi|} X^*), \qquad \tilde{W} = (Id_F \otimes P)(F \otimes_{|\pi|} X^*)
$$
By Lemma \ref{lemma-tensor-product-of-L-summand}, $W$ is a weak$^*$-closed (order) $L$-summand in $E \otimes_{|\pi|} X^*$ whose corresponding $L$-projection is $Id_E \otimes P$.
Moreover, $W$ (resp. $\tilde{W}$) is the range of $Id_E \otimes q^*$ (resp. $Id_F \otimes q^*$) because the range of $q^*$ is precisely $J^\perp$.

Let $H = K \cap W$; recalling that the vertical arrows in \eqref{eqn-lemma-CE-diagram-tensors} are isometries onto their images, observe that we can identify $H$ with the closed unit ball of $E \otimes_{|\pi|} (X/J)^*$
and similarly we can identify the closed unit ball of $F \otimes_{|\pi|} (X/J)^*$ with $\tilde{D} \cap \tilde{W}$.

We have
\begin{equation}\label{eqn-lemma-CE-commutativity}
(\iota \otimes Id_{X^*})(Id_F \otimes P) = (Id_E \otimes P)(\iota \otimes Id_{X^*}),
\end{equation}
and therefore
$$
(\iota \otimes Id_{X^*})\tilde{W} = (\iota \otimes Id_{X^*})(F \otimes_{|\pi|} X^*) \cap W.
$$
Moreover, since $\iota \otimes Id_{X^*}$ maps $\tilde{D}$ onto $D \subseteq (\iota \otimes Id_{X^*})(F \otimes_{|\pi|} X^*)$, it follows that
\begin{equation}\label{eqn-lemma-CE-intersection}
(\iota \otimes Id_{X^*})(\tilde{D} \cap \tilde{W}) \subseteq D \cap (\iota \otimes Id_{X^*}) \tilde{W} \subseteq D \cap W = D \cap H.
\end{equation}

Consider the commutative diagram of inclusion maps
$$
\xymatrix{
D \cap H \ar[r] \ar[d] &H \ar[d] \\
D \ar[r] &K,
}
$$
and observe that the diagram \eqref{eqn-lemma-CE-diagram-regular} can be identified with the diagram of restriction maps
\begin{equation}\label{eqn-lemma-CE-diagram-affine}
\xymatrix{
\Aff_0(D \cap H) &\Aff_0(H) \ar[l] \\
\Aff_0(D) \ar[u] &\Aff_0(K) \ar[l] \ar[u]
}
\end{equation}
Since $Id_F \otimes P : F\otimes_{|\pi|}X^* \to F\otimes_{|\pi|}X^*$ maps $\tilde{D}$ onto $\tilde{D} \cap \tilde{W}$, it follows from \eqref{eqn-lemma-CE-commutativity} and \eqref{eqn-lemma-CE-intersection} that $Id_E \otimes P$ maps $D$ onto $D \cap H$,
and therefore the conditions of \cite[Lemma 2.1]{Choi-Effros} are satisfied.

Let us now reinterpret the hypotheses in terms of the diagram \eqref{eqn-lemma-CE-diagram-affine}.
We are given $a \in \Aff_0(H)$ with $\n{a} \le 1$ (corresponding to $T$) and $b \in \Aff_0(K)$ with $\n{b} \le 1$ (corresponding to $L$) such that $\n{\restr{(a-b)}{D \cap H}} \le \eps$.
Therefore, by \cite[Lemma 2.1]{Choi-Effros}, there exists $\tilde{a} \in \Aff_0(K)$ (corresponding to $\tilde{L} \in \Lr(E,X)$) such that $\n{\tilde{a}} \le 1$, $\restr{\tilde{a}}{H} = a$ (corresponding to $q \circ L = T$) and $\n{\restr{(a-b)}{D}} \le 6\eps$ (corresponding to $\n{ \restr{(\tilde{L}-L)}{F} }_r \le 6\eps$).
\end{proof}

It is well-known that in general, a finite-dimensional subspace of a Banach lattice is not necessarily contained in a finite-dimensional vector sublattice.
However, under suitable completeness assumptions this can almost be achieved: any finite-dimensional subspace can be placed inside a finite-dimensional vector sublattice by ``moving it a little bit'' using a linear operator (see, e.g. \cite[Prop. 2.1]{Blanco}).
The following is a variation on a lemma of this type due to Lissitsin and Oja \cite[Lemma 5.5]{Lissitsin-Oja}, where the linear operator has extra structure.

\begin{lemma}\label{lemma-LO}
Let $F$ be a finite-dimensional subspace of a Dedekind complete Banach lattice $X$ and let $\eps>0$. Then there exist a sublattice $Z$ of $X$ containing $F$, a finite-dimensional sublattice $G$ of $Z$, and a lattice-homomorphic projection $P$ from $Z$ onto $G$ such that $\n{ \restr{(P-Id)}{F} } \le \eps$.
If $F$ contains a vector sublattice $A$ of $X$, we can additionally arrange to have $\n{ \restr{(P-Id)}{A} }_r \le \eps$.
\end{lemma}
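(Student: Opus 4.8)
The plan is to realize a principal ideal of $X$ containing $F$ as a space of continuous functions via the Kakutani representation, and to build $G$ and $P$ out of a sufficiently fine clopen partition of the underlying compact space; the role of Dedekind completeness is precisely to make that space well-behaved enough for such partitions to exist. Following the general scheme behind \cite[Lemma 5.5]{Lissitsin-Oja}, I would fix a basis $x_1,\dots,x_m$ of $F$ and set $e=\sum_i|x_i|$, so that $F$ lies in the principal ideal $I_e$. Since $X$ is Dedekind complete, $I_e$ is a Dedekind complete AM-space with order unit $e$, hence by Kakutani--Krein $I_e\equiv C(K)$ for a compact Hausdorff $K$ with $e\leftrightarrow\mathbf 1$; Dedekind completeness forces $K$ to be Stonean, in particular totally disconnected, so that its clopen subsets form a basis. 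Writing $f_i\in C(K)$ for the image of $x_i$ and given $\delta>0$, I would cover $K$ by finitely many open sets on which each $f_i$ has oscillation $<\delta$ and refine this, using total disconnectedness, to a finite partition of $K$ into nonempty clopen sets $K_1,\dots,K_n$.

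Choosing $t_\ell\in K_\ell$, I set $Z=I_e$, $G=\spa\{\mathbf 1_{K_1},\dots,\mathbf 1_{K_n}\}$, and define $P:Z\to G$ by $Pf=\sum_\ell f(t_\ell)\mathbf 1_{K_\ell}$. Each $\mathbf 1_{K_\ell}$ is continuous (as $K_\ell$ is clopen) and these are pairwise disjoint, so $P$, being a sum of point evaluations weighted by disjoint atoms, is a lattice homomorphism; since $P\mathbf 1_{K_\ell}=\mathbf 1_{K_\ell}$, it is a projection onto the finite-dimensional sublattice $G$. The oscillation bound gives $|Pf_i-f_i|\le\delta\,e$ in $C(K)$, whence $\n{\restr{(P-Id)}{F}}\le C\delta\n{e}$ for the basis constant $C$ of $F$; choosing $\delta$ small enough yields the first estimate.

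For the statement involving $A$, let $u_1,\dots,u_k$ be the pairwise disjoint positive atoms of the finite-dimensional sublattice $A$, with images $g_1,\dots,g_k\in C(K)$. In the Stonean space $K$ each $g_j$ has clopen support and these supports are pairwise disjoint, so I would additionally require $\{K_\ell\}$ to refine the family of these supports, i.e.\ each $K_\ell$ either lies inside exactly one $\operatorname{supp} g_j$ or misses all of them (intersect the oscillation partition with the clopen partition $\{\operatorname{supp} g_1,\dots,\operatorname{supp} g_k, K\setminus\bigcup_j\operatorname{supp} g_j\}$). Since $X$ is Dedekind complete and $A$ is finite-dimensional, every operator $A\to X$ is regular and its modulus is computed pointwise on atoms, giving $|\restr{(P-Id)}{A}|(u_j)=|Pg_j-g_j|=\sum_\ell|g_j(t_\ell)-g_j|\mathbf 1_{K_\ell}$. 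The support-refinement makes the contributions of distinct atoms disjointly supported, so for positive $a=\sum_j c_j u_j$ one obtains $|\restr{(P-Id)}{A}|(a)\le\delta(\max_j c_j)\,e$; bounding $\max_j c_j\le\n{a}/\min_j\n{u_j}$ (valid since the $u_j$ are disjoint) yields $\n{\restr{(P-Id)}{A}}_r\le\delta\n{e}/\min_j\n{u_j}$, which is $\le\eps$ once $\delta$ is small enough in terms of the fixed data of $A$.

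The main obstacle is this second, regular-norm estimate: the plain operator norm of $P-Id$ controls only the diagonal behavior, whereas the regular norm sees the full modulus, and an arbitrary fine partition can make $|\restr{(P-Id)}{A}|$ large by mixing the disjoint bands of the atoms $u_j$. The device that resolves it is forcing the clopen partition to be subordinate to the disjoint clopen supports of the atoms of $A$ — an option available precisely because Dedekind completeness makes $K$ Stonean — after which the modulus computation decouples over the atoms and a single parameter $\delta$ controls both estimates at once.
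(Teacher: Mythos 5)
Your proposal is correct and is essentially the paper's own route: the paper proves this lemma purely by citation --- to \cite[Lemma 5.5]{Lissitsin-Oja} for the construction (noting that it works under Dedekind completeness and that the projection built there is in fact a lattice homomorphism) and to \cite[Lemma 2.4]{Blanco} for the regular-norm estimate --- and the argument behind those citations is precisely the one you give, namely the Kakutani representation of the principal ideal $I_e$ over a Stonean space, a fine clopen partition, and the point-evaluation projection onto the span of the indicator functions, with the partition refined along the disjoint clopen supports of the atoms of $A$. Two small remarks, neither a gap: the partition must also be taken fine enough for the images of the atoms of $A$ themselves (automatic up to a fixed constant, since $A \subseteq F$ and there are finitely many atoms), and your $P$ need not be bounded on $Z = I_e$ with respect to the norm of $X$ (point evaluations are only continuous for the order-unit norm), which is consistent with the statement and with every use of the lemma in the paper, since only restrictions of $P$ to finite-dimensional subspaces are ever normed.
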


\begin{proof}
The statement in \cite{Lissitsin-Oja} asks for $X$ to be order continuous, but the proof only requires Dedekind completeness as can be seen in the proof of the related result \cite[Lemma 2.4]{Blanco}.
Additionally \cite[Lemma 5.5]{Lissitsin-Oja} only has $P$ being a positive projection, but it is clear from their proof that $P$ is also a lattice homomorphism.
The only other thing missing in \cite{Lissitsin-Oja} is the small regular norm,
which follows from \cite[Lemma 2.4]{Blanco}.
\end{proof}

The next preparatory lemma will allow us to define a regular map on a Banach lattice in a step-by-step fashion, by defining it on larger and larger vector sublattices.
As with Proposition \ref{prop-PLRL-a-la-Dean}, this might be folklore but we have been unable to locate a reference.

\begin{lemma}\label{lemma-regular-defined-by-pieces}
Let $Y$, $Z$ be Banach lattices with $Z$ Dedekind complete.
Suppose $(Y_n)$ is an increasing sequence of vector sublattices of $Y$ such that $\bigcup_{n=1}^\infty (Y_n)_+$ is dense in $Y_+$, and let $T : \bigcup_{n=1}^\infty Y_n \to Z$ be a linear operator such that for each $n\in\N$ we have $\n{ \restr{T}{Y_n} : Y_n \to Z }_r \le 1$.
Then $T$ extends to a bounded linear operator $\tilde{T} : Y \to Z$ with $\bign{\tilde{T}}_r \le 1$. 
\end{lemma}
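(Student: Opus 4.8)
The plan is to separate the construction into two parts: first produce $\tilde T$ as a bounded linear operator by a density argument, and then upgrade this to a regular operator with $\|\tilde T\|_r \le 1$ by exhibiting a single dominating positive operator.

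First I would check that $\bigcup_n Y_n$ is norm-dense in $Y$. Since each $Y_n$ is a sublattice we have $Y_n = (Y_n)_+ - (Y_n)_+$, so $\bigcup_n Y_n = \spa\big(\bigcup_n (Y_n)_+\big)$; and as every $y \in Y$ decomposes as $y = y^+ - y^-$ with $y^\pm \in Y_+$, the density of $\bigcup_n (Y_n)_+$ in $Y_+$ forces density of $\bigcup_n Y_n$ in $Y$. Because the operator norm is dominated by the regular norm, the hypothesis $\|\restr{T}{Y_n}\|_r \le 1$ gives $\|\restr{T}{Y_n}\| \le 1$ for every $n$, so $\|T\| \le 1$ on the dense subspace $\bigcup_n Y_n$. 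Hence $T$ extends uniquely to a bounded linear operator $\tilde T : Y \to Z$ with $\|\tilde T\| \le 1$.

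The heart of the matter is regularity. Since $Z$ is Dedekind complete, it is enough to construct a positive operator $S : Y \to Z$ with $\|S\| \le 1$ and $\pm \tilde T \le S$, for then $\|\tilde T\|_r \le \|S\| \le 1$. For $x \in (Y_m)_+$ I would put $S_n := |\restr{T}{Y_n}|$, which exists in $\Lr(Y_n, Z)$ because $Z$ is Dedekind complete, with $\|S_n\| = \|\restr{T}{Y_n}\|_r \le 1$ and $S_n(x) = \sup\{|Ty| : y \in Y_n,\ |y| \le x\}$. As larger sublattices admit more competitors $y$, the sequence $(S_n(x))_{n \ge m}$ is increasing and norm-bounded by $\|x\|$, and I would define $S(x) := \sup_n S_n(x) = \sup\{|Ty| : y \in \bigcup_k Y_k,\ |y| \le x\}$. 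A routine computation (using that each $S_n$ is additive and that the supremum of a sum of increasing sequences is the sum of the suprema) shows $S$ is additive and positively homogeneous on $\bigcup_n (Y_n)_+$, so it extends to a positive map on $\bigcup_n Y_n$ with $\|S(x)\| \le \|x\|$, and then by density to a positive operator $S : Y \to Z$ with $\|S\| \le 1$. Taking $y = u$ in the defining supremum gives $|\tilde T u| \le S(u)$ for $u \in \bigcup_n (Y_n)_+$, and by continuity together with closedness of the cone this persists for all $u \in Y_+$; thus $\pm \tilde T \le S$, completing the proof.

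The step I expect to be the main obstacle is the existence of the supremum $\sup_n S_n(x)$ inside $Z$, together with the control $\|S(x)\| \le \|x\|$. The sequence $(S_n(x))_n$ is increasing and norm-bounded, but in a general Dedekind complete lattice a norm-bounded increasing sequence need not be order-bounded, so the real work is to show that \emph{this particular} sequence is order-bounded in $Z$, equivalently that $\{|Ty| : y \in \bigcup_k Y_k,\ |y| \le x\}$ has an upper bound in $Z$. This is precisely where Dedekind completeness and the uniform bound $\|S_n\| \le 1$ must be leveraged against the density hypothesis; once order-boundedness is secured, Dedekind completeness furnishes the supremum, the Fatou-type lower semicontinuity of the norm yields $\|S(x)\| \le \|x\|$, and the remaining points (additivity of $S$ and the reduction of a general $x \in Y_+$ to the dense cone) are bookkeeping.
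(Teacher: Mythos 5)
Your overall architecture (extend by density, then dominate $\tilde T$ by a single positive operator $S$ with $\n{S}\le 1$) is sound, and the perimeter steps you call routine really are routine: density of $\bigcup_n Y_n$, the bound $\bign{\tilde T}\le 1$, additivity of $S$ on the cone, and the passage from the dense cone to $Y_+$ via closedness of the positive cone. But the proposal is missing its core, and you say so yourself: you never prove that $\sup_n S_n(x)$ exists. Dedekind completeness of $Z$ produces suprema of sets that are \emph{order} bounded above, not of sets that are merely norm bounded, and an increasing norm-bounded sequence in a Dedekind complete Banach lattice can genuinely fail to have a supremum: $c_0$ is Dedekind complete, yet the increasing sequence $z_n = \sum_{k\le n} e_k$, of norm one, has no supremum there (any upper bound would have every coordinate $\ge 1$). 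Moreover, the order boundedness you need is not a deferrable verification — it is essentially the lemma itself: once the lemma is known, $|\tilde T|x$ is exactly such an upper bound for $\{\,|Ty| : y \in \bigcup_k Y_k,\ |y|\le x\,\}$, and conversely, as your own outline shows, the lemma follows easily once that bound is in hand. So the proposal reduces the statement to its hardest point and stops there. There is also a second, independent flaw: even granting that $S(x)=\sup_n S_n(x)$ exists, the estimate $\n{S(x)}\le \n{x}$ that you attribute to ``Fatou-type lower semicontinuity'' is not available in general; the Fatou property can fail even in Dedekind complete Banach lattices. For instance, renorm $\ell_\infty$ by $\n{x} = \max\{ \n{x}_\infty,\ 2\limsup_n |x(n)| \}$: this is an equivalent lattice norm and the order (hence Dedekind completeness) is unchanged, each $z_n$ above has norm $1$, but $\sup_n z_n = \mathbf{1}$ has norm $2$.

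The paper's proof supplies precisely the missing ingredient, and it never takes a supremum over $n$. Fix $x \in Y_+$ and $\eps>0$; put $x_0 = x$ and inductively choose $n_k$ and $x_k' \in (Y_{n_k})_+$ with $\n{x_k - x_k'} < \eps 2^{-(k+1)}$, setting $x_{k+1} = |x_k - x_k'|$, so that $\sum_k x_k'$ converges absolutely with $\sum_k \n{x_k'} \le \n{x} + 2\eps$. Given $0 \le y \le x$, the inequality $y_k \le x_k \le x_k' + x_{k+1}$ and the Riesz decomposition property yield, inductively, a norm-convergent decomposition $y = \sum_k y_k'$ with $0 \le y_k' \le x_k'$, and from this $|\tilde T y|$ is dominated by the single element $\sum_k \bigl| \restr{T}{Y_{n_k}} \bigr| x_k' \in Z$, a norm-convergent series of positive terms. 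This explicit dominating element shows at once that $\tilde T[0,x]$ is order bounded (so $|\tilde T|$ exists, by Dedekind completeness of $Z$) and that $\bign{|\tilde T|x}$ is at most $\n{x}$ plus a multiple of $\eps$, with no appeal to any Fatou property. If you want to salvage your plan, this construction — geometric approximation of $x$ inside the sublattices combined with iterated Riesz decomposition — is exactly what you would have to carry out to exhibit an order upper bound for $\{S_n(x)\}_n$; at that point your route and the paper's coincide.
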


\begin{proof}
For simplicity, let us define $T_n = \restr{T}{Y_n} : Y_n \to Z$.
Notice that $T$ uniquely extends to a continuous $\tilde{T} : Y \to Z$, so we only need to check that $\tilde{T}$ has regular norm at most one.
Since $Z$ is Dedekind complete, the regularity of a $Z$-valued map is equivalent to it being order bounded or having a modulus, see \cite[Prop. IV.1.2]{Schaefer}.

Let $x \in Y_+$, and let $\eps>0$ be given.
Set $x_0=x$. Assuming $x_k \in Y_+$ has been chosen, find  $n_k \in \N$ and $x_k' \in (Y_{n_k})_+$ such that $\n{x_k-x'_k} < \eps/2^{k+1}$, and set $x_{k+1} = |x_k-x'_k|$.
Observe that both of the series
$$
\sum_{k=1}^\infty x_k, \quad \sum_{k=1}^\infty x'_k
$$
converge absolutely, since for any $k$ we have $\n{x_{k+1}} < \eps/2^{k+1}$ and
$\n{x'_k} \le \n{x_k} + \n{x_k-x'_k} \le \n{x_k} + \eps/2^{k+1}$.

Now let $y \in Y$ such that $0 \le y \le x$.
Set $y_0=y$, and
assume $0 \le y_k \le x_k$ has been chosen.
Observe that $0 \le y_k \le x_k \le x'_k + x_{k+1}$, since $x_{k+1} = |x_k-x'_k|$.
Therefore, by the Riesz decomposition property \cite[Thm. 1.15]{Aliprantis-Burkinshaw}, we can write $y_k = y'_k + y_{k+1}$ with $0 \le y'_k \le x'_k$ and $0 \le y_{k+1} \le x_{k+1}$. 

Observe that the series
$
\sum_{k=1}^\infty y_k'
$
also converges absolutely, since for any $k$ we have
$y'_{k+2} \le y_{k+1} \le x_{k+1}$, hence $\n{y'_{k+2}} \le \n{x_{k+1}} < \eps/2^{k+2}$.
Moreover, note that $y = \sum_{k=1}^\infty y'_k$.
Therefore,
$$
|\tilde{T}y| = \Big| \sum_{k=1}^\infty Ty'_k \Big| \le \sum_{k=1}^\infty |Ty'_k| \le \sum_{k=1}^\infty |T_{n_k}| y'_k \le \sum_{k=1}^\infty |T_{n_k}| x'_k
$$
where the last series converges absolutely because all the operators $|T_{n_k}|$ are contractions.

This shows that $\tilde{T}$ is order bounded, and since $Z$ is Dedekind complete, $\tilde{T}$ has a modulus and is therefore regular.
Moreover, the above calculations show that for $x \in Y_+$ we have
$$
\bign{ |\tilde{T}| x } \le  \n{x} + \eps
$$
and therefore
$$
\bign{\tilde{T}}_r = \bign{ \,|\tilde{T}|\, } \le 1.
$$
\end{proof}

%------------------------------------------------------------------
%------------------------------------------------------------------
%------------------------------------------------------------------
\section{Ando-Choi-Effros liftings for regular maps under the BPAP}\label{sec-main}

We are now ready to prove our first Ando-Choi-Effros lifting theorem for regular maps between Banach lattices.
The argument is similar to that of \cite[Thm. 2.6]{Choi-Effros} (which in turn was inspired by \cite[Prop. 5]{Andersen}),
but it is significantly more involved due to the aforementioned fact that 
a finite-dimensional subspace of a Banach lattice is not necessarily contained in a finite-dimensional vector sublattice.
This is also why we are using  \cite[Thm. 2.6]{Choi-Effros} as a model, instead of a cleaner proof such as that of \cite[Thm. II.2.1]{Harmand-Werner-Werner}: the ``wiggle'' factor coming from Lemma \ref{lemma-LO} appears to render those cleaner arguments inaccessible.

\begin{theorem}\label{thm-Ando-Choi-Effros-regular maps}
	Suppose that $J$ is an order $M$-ideal in the Dedekind complete Banach lattice $X$, and let $q : X \to X/J$ be the canonical quotient map.
	Let $Y$ be a separable and Dedekind complete Banach lattice, and let $T : Y \to X/J$ be a regular map with $\n{T}_r = 1$.
	If $Y$ has the $\lambda$-BPAP,
	then there exists $L : Y \to X$ such that $q \circ L = T$
	and $\n{L}_r \le \lambda$.
\end{theorem}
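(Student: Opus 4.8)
The plan is to transport the finite-dimensional gluing argument of \cite[Thm.~2.6]{Choi-Effros} to the lattice setting, using the order-theoretic substitutes assembled above: Lemma~\ref{lemma-CE} as the finite-dimensional lifting step, Lemma~\ref{lemma-LO} to reconcile finite-dimensional subspaces with finite-dimensional sublattices, and Lemma~\ref{lemma-regular-defined-by-pieces} to assemble the pieces and control the regular norm of the limit. Throughout I normalize so that $\n{T}_r=1$.

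First I would set up the exhaustion. Fix a sequence dense in $Y_+$, and using separability together with the $\lambda$-BPAP, choose finite-rank \emph{positive} operators $R_n:Y\to Y$ with $\n{R_n}\le\lambda$ and $R_n\to Id_Y$ strongly. Positivity of the $R_n$ is essential here, and is exactly why the BPAP (rather than the mere BAP) is assumed: it makes each $R_n y$ positive when $y$ is, so that the approximants stay in the positive cone, and since $\n{R_n}_r=\n{R_n}\le\lambda$ for a positive operator, it is the $R_n$ that will inject the constant $\lambda$ into the final bound. Alongside these I would build an increasing sequence of finite-dimensional subspaces $E_n$, with $\bigcup_n E_n$ dense in $Y$, each $E_n$ containing $R_n(Y)$ together with the sublattice produced at the previous stage.

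The heart of the argument is an inductive construction of \emph{exact} finite-dimensional liftings of regular norm at most $1$. At stage $n$, Lemma~\ref{lemma-LO} applied to $E_n$ furnishes a finite-dimensional sublattice $G_n$ and a lattice-homomorphic projection $P_n$ carrying $E_n$ into $G_n$ with displacement at most $\eps_n$ (in norm on $E_n$, and in regular norm on the previous sublattice, using the refinement of Lemma~\ref{lemma-LO}). Restricting the previous stage's approximate lifting to $G_n$ gives a regular map that lifts $T$ on $G_n$ up to an error governed by $\eps_n$; Lemma~\ref{lemma-CE} then upgrades it to an exact regular lifting $\ell_n:G_n\to X$ with $q\circ\ell_n=\restr{T}{G_n}$, $\n{\ell_n}_r\le1$, and consistency with the previous lifting controlled by $6\eps_n$ on the overlap. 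Taking $\sum_n\eps_n<\infty$ makes the global approximants $\ell_n\circ P_n\circ R_n:Y\to X$ (each of regular norm at most $\lambda(1+\eps_n)$) a Cauchy sequence, converging to a bounded $L:Y\to X$; since $q\circ(\ell_n\circ P_n\circ R_n)=T\circ P_n\circ R_n\to T$, the limit satisfies $q\circ L=T$.

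The principal obstacle is precisely the ``wiggle'' flagged in the introduction: each use of Lemma~\ref{lemma-LO} displaces the previously built sublattice, so the $G_n$ do not nest and $\ell_{n+1}$ does not literally extend $\ell_n$, and no naive gluing is available. I would absorb this by keeping the combined wiggle-and-lifting error summable, carrying out the comparison of consecutive liftings on the genuinely nested subspaces $E_n$ rather than on the sublattices, and noting that the factor $6$ from Lemma~\ref{lemma-CE} is harmless since it multiplies already small quantities. The final and most delicate point is that the limiting procedure secures only the ordinary norm of $L$, whereas the conclusion demands the \emph{regular} norm bound $\n{L}_r\le\lambda$, and the regular norm is not controlled under strong limits. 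For this I would appeal to Lemma~\ref{lemma-regular-defined-by-pieces}: arranging the construction so that $L$ is defined on an increasing sequence of sublattices whose positive cones are dense in $Y_+$ and on each of which its restricted regular norm is at most $\lambda$, the Riesz-decomposition argument of that lemma yields $\n{L}_r\le\lambda$. Reconciling the non-nested sublattices coming from Lemma~\ref{lemma-LO} with the increasing filtration demanded by Lemma~\ref{lemma-regular-defined-by-pieces} is the central piece of bookkeeping, and once it is in place the proof is complete.
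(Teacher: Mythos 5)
Your proposal assembles exactly the right ingredients (positive BPAP approximants, Lemma~\ref{lemma-LO}, Lemma~\ref{lemma-CE}, Lemma~\ref{lemma-regular-defined-by-pieces}) and correctly identifies both obstacles --- the non-nesting of the sublattices $G_n$ and the failure of the regular norm to pass to strong limits --- but it does not resolve them; what you defer as ``the central piece of bookkeeping'' is in fact the main idea of the proof, and it is missing. A first concrete failure point: you propose to carry out the Lemma~\ref{lemma-CE} comparison of consecutive liftings ``on the genuinely nested subspaces $E_n$ rather than on the sublattices.'' This cannot be done. In Lemma~\ref{lemma-CE} the set $F$ on which the approximate-lifting error is measured must be a finite-dimensional \emph{Banach lattice} inside $E$: both the hypothesis $\n{\restr{(q\circ L-T)}{F}}_r\le\eps$ and the conclusion are regular-norm estimates, which do not make sense on a subspace that is not a sublattice, and the proof of that lemma (via $F\otimes_{|\pi|}X^*$ and Lemma~\ref{lemma-tensor-product-of-L-summand}) genuinely uses the lattice structure of $F$. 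The paper's way around this is to introduce the connecting lattice isomorphisms $j_k=\restr{P_{k+1}}{G_k}$ and to take as overlap the sublattice $j_kj_{k-1}(G_{k-1})\subseteq G_{k+1}$, i.e., the image of an earlier sublattice under the lattice homomorphisms, paying for this with the chain of estimates \eqref{eqn-ACE-jk}--\eqref{eqn-ACE-regular-difference-of-Ls}.

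Second, and more seriously, your appeal to Lemma~\ref{lemma-regular-defined-by-pieces} has nothing to apply to: precisely because the $G_n$ do not nest, the construction produces no increasing sequence of sublattices of $Y$ with dense positive cones on which the limit operator has controlled regular norm, and (as you yourself note) the bounds $\n{\ell_n\circ P_n\circ R_n}_r\le\lambda(1+\eps_n)$ on the approximants do not survive a strong limit, since the codomain $X$ is not assumed to be a dual lattice. The paper's resolution is not bookkeeping but a further construction: first the liftings $L_k$ are upgraded to \emph{exactly} compatible ones, $\tilde{L}_k=\tilde{L}_{k+1}j_k$, obtained as norm limits of the telescoped operators $L_{k+1}j_k\cdots j_{n_0}$ via the Cauchy estimate \eqref{eqn-ACE-Cauchy}; then the whole problem is transferred to the sequence lattice $c(G_k)$, inside which the sets $A_n$ built from the maps $j_k$ \emph{do} form an increasing sequence of sublattices with $\bigcup_n(A_n)_+$ dense in $\big(c(G_k)\big)_+$. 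It is there --- not in $Y$ --- that Lemma~\ref{lemma-regular-defined-by-pieces} is applied, yielding $\hat{L}:c(G_k)\to c(X)$ with $\bign{\hat{L}}_r\le 1$; the lifting is then $L=Q\circ\hat{L}\circ S$ with $Sy=(S_ky)_k$ and $Q$ the positive contractive limit map, and even at this stage the bound $\n{L}_r\le\lambda$ needs a final tail trick, replacing $S$ by $y\mapsto(S_ky)_{k\ge k_0}$ and letting $k_0\to\infty$. Without this device (or an equivalent one), your argument can at best produce a bounded lifting with $q\circ L=T$, with no control on $\n{L}_r$, so the theorem as stated is not proved.
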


\begin{proof} 
To simplify the writing of the various estimates we will assume $\lambda =1$, but the same argument works for any $\lambda \ge 1$.
Fix a dense sequence $\{y_n\}_{n=1}^\infty$ in the unit sphere of $Y$.
We inductively define a sequence $(G_k)$ of finite-dimensional vector sublattices of $Y$, positive maps $S_k : Y \to G_k$ and lattice isomorphisms onto their images $j_k : G_{k} \to G_{k+1}$ as follows.
Let $G_0 = \{0\}$ and $S_0 = 0$.
Having defined $G_k$, $S_k$ and $j_{k-1}$ for an integer $k \ge 0$ (with the convention $j_{-1} = 0$),
use the 1-BPAP to find a finite-rank positive map $R_{k+1} : Y \to Y$ such that $\n{R_{k+1}} \le 1$ and
$$
\n{ \restr{(R_{k+1} - Id)}{G_k} }_r  < 2^{-(k+1)}.
$$
It should be noted that the BPAP only gives small operator norm, but we can obtain small regular norm by \cite[Lemma 2.4]{Blanco}
as we did in the proof of Lemma \ref{lemma-LO}. 
Consider the finite-dimensional subspace $F_{k+1} = G_k + R_{k+1}(Y) + \R y_{k+1}$, and apply Lemma \ref{lemma-LO} to find a sublattice $Z_{k+1}$ of $Y$ containing $F_{k+1}$, and  a lattice homomorphic projection $P_{k+1} : Z_{k+1}\to Z_{k+1}$ onto a finite-dimensional vector sublattice $G_{k+1} \subset Z_{k+1}$ such that
 $$
 \n{ \restr{ (P_{k+1}-Id)}{F_{k+1}} } < 2^{-(k+1)} \quad \text{and} \quad \n{ \restr{ (P_{k+1}-Id)}{G_k} }_r < 2^{-(k+1)}.
 $$ 
Define $j_{k} : G_{k} \to G_{k+1}$ as the restriction of $P_{k+1}$ to $G_k$, and observe that 
\begin{equation}\label{eqn-ACE-jk}
\n{ j_k -\restr{Id}{G_{k}}}_r <  2^{-(k+1)}, \text{ so } \n{j_{k}} = \n{j_{k}}_r < 1 + 2^{-(k+1)}.
\end{equation}
Define $S_{k+1} : Y \to G_{k+1}$ as $S_{k+1} = P_{k+1} \circ R_{k+1}$, and observe that $S_{k+1}$ has finite rank, is positive,
$\n{ S_{k+1} } < 1 +  2^{-(k+1)}$ and
\begin{multline}\label{eqn-ACE-Sk}
\n{ \restr{(S_{k+1} - Id)}{G_k} }_r  = \n{ \restr{(P_{k+1} \circ R_{k+1} - Id)}{G_k} }_r \\
\le \n{ \restr{(P_{k+1} \circ (R_{k+1} - Id) )}{G_k} }_r +  \n{ \restr{(P_{k+1} - Id)}{G_k} }_r \\
< \n{\restr{P_{k+1}}{F_{k+1}}} \cdot \n{ \restr{(R_{k+1} - Id)}{G_k} }_r  + \frac{1}{2^{k+1}} < 2 \cdot \frac{1}{2^{k+1}} + \frac{1}{2^{k+1}} = \frac{3}{2^{k+1}}.
\end{multline}

Next, we construct inductively a sequence of maps $L_k : G_k \to X$ such that $\n{L_k}_r \le 1$ and $q \circ L_k = \restr{T}{G_k}$.
Let $L_0 = 0$, and assume we have defined such a map $L_k$ for a particular integer $k \ge 0$.
With the aim of applying Lemma \ref{lemma-CE}, now consider (with the convention that anything with subindex $-1$ or $0$ is taken to be zero)
the subspace $j_kj_{k-1}(G_{k-1})$ of $G_{k+1}$
and the maps 
$$
\restr{T}{G_{k+1}} : G_{k+1} \to X/J \quad  \text{and} \quad \frac{1}{1 +  2^{-(k+1)}} L_k \circ \restr{S_k}{G_{k+1}} : G_{k+1} \to X,
$$
both of which have regular norm at most one.
Now,
\begin{multline}\label{eqn-ACE-regular-diff-1}
\n{ \restr{ (q \circ  \tfrac{1}{1 +  2^{-(k+1)}} L_k \circ S_k - T ) } {j_kj_{k-1}(G_{k-1})}  }_r \\
\le \n{ \restr{ (q \circ L_k \circ S_k - T ) } {j_kj_{k-1}(G_{k-1})}  }_r  + \n{ \big( 1-\tfrac{1}{1 +  2^{-(k+1)}} \big) q \circ L_k \circ S_k }_r \\
\le  \n{ \restr{ (q \circ L_k \circ S_k - T ) } {j_kj_{k-1}(G_{k-1})}  }_r + \tfrac{2^{-(k+1)}}{1 +  2^{-(k+1)}} \cdot 1 \cdot 1 \cdot (1 +  2^{-(k+1)}) \\
\le  \n{ \restr{ (q \circ L_k \circ S_k - T ) } {j_kj_{k-1}(G_{k-1})}  }_r + 2^{-(k+1)}.
\end{multline}
On the other hand,
\begin{multline}\label{eqn-ACE-regular-diff-2}
\n{ \restr{ (q \circ L_k \circ S_k - T ) } {j_kj_{k-1}(G_{k-1})}  }_r = \n{ \restr{ (T \circ (S_k - Id) ) } {j_kj_{k-1}(G_{k-1})}  }_r \le \\
\n{T}_r \n{ \restr{ (S_k - Id) } {j_kj_{k-1}(G_{k-1})}  }_r  %\le \n{j_k^{-1}} \cdot \n{j_{k-1}^{-1}}  \n{ \restr{(S_kj_kj_{k-1}- j_kj_{k-1})}{G_{k-1}} }_r \\
\le 4  \n{ \restr{(S_kj_kj_{k-1}- j_kj_{k-1})}{G_{k-1}} }_r,
\end{multline}
where we have used that $\n{j_m^{-1}}= \n{j_m^{-1}}_r\le 2$ for $m \ge 1$.
Since
\begin{multline*}
S_kj_kj_{k-1}-j_kj_{k-1} = S_k(j_k-Id)j_{k-1} + S_k(j_{k-1}-Id)+(S_k-Id)+(Id-j_{k-1})+(Id-j_k)j_{k-1},
\end{multline*}
using \eqref{eqn-ACE-jk}, \eqref{eqn-ACE-Sk} and $\n{S_k} \le 2$ we conclude
$$
\n{ \restr{(S_kj_kj_{k-1}- j_kj_{k-1})}{G_{k-1}} }_r < \frac{12}{2^k}
$$
which, together with \eqref{eqn-ACE-regular-diff-1} and \eqref{eqn-ACE-regular-diff-2} implies
$$
\n{ \restr{ (q \circ  \tfrac{1}{1 +  2^{-(k+1)}} L_k \circ S_k - T ) } {j_kj_{k-1}(G_{k-1})}  }_r < \frac{49}{2^k}.
$$
Therefore, by Lemma \ref{lemma-CE} there exists $L_{k+1} : G_{k+1} \to X$ such that $\n{L_{k+1}}_r \le 1$, ${q \circ L_{k+1} = \restr{T}{G_{k+1}}}$ and
$$
\n{ \restr{(L_{k+1} - \tfrac{1}{1 +  2^{-(k+1)}} L_k \circ S_k) }{j_kj_{k-1}(G_{k-1})} }_r <  \frac{294}{2^k},
$$
from where it follows that 
\begin{multline}\label{eqn-ACE-regular-difference-of-Ls}
\n{ \restr{(L_{k+1} - L_k \circ S_k) }{j_kj_{k-1}(G_{k-1})} }_r  \\
\le \n{ \restr{(L_{k+1} - \tfrac{1}{1 +  2^{-(k+1)}} L_k \circ S_k) }{j_kj_{k-1}(G_{k-1})} }_r + \n{ \restr{ (1-\tfrac{1}{1 +  2^{-(k+1)}})  L_k \circ S_k) }{j_kj_{k-1}(G_{k-1})} }_r < \frac{295}{2^k}.
\end{multline}

Notice that since $\sum_{k=1}^\infty 2^{-k}$ converges, so does the infinite product
$
\prod_{k = 1}^\infty (1+2^{-k})
$.
Let $C_n = \prod_{k = n}^\infty (1+2^{-k})$, and observe that $C_n$ converges to 1.

Fix a number $n_0 \in \N$.
Consider the sequence of operators $\big(  L_{k+1} j_{k} \cdots j_{n_0+1} j_{n_0}  \big)_{k > n_0+2}$ in $\Lr(G_{n_0},X)$.

From \eqref{eqn-ACE-jk} and \eqref{eqn-ACE-Sk}, and observing that for $x \in G_k$
$$
|S_{k+1}j_k x - j_k x| \le |S_{k+1}j_kx - S_{k+1}x| + |S_{k+1}x - x| + |x - j_kx|,
$$
 it follows that
$$
\n{ \restr{(S_{k+1} - Id)}{j_k(G_k)} }_r < 5 \cdot 2^{-(k+1)},
$$
and by an analogous argument we get
\begin{equation}\label{eqn-ACE-Sk-adjusted}
\n{ \restr{(S_{k+1} - Id)}{j_{k+1}j_k(G_k)} }_r < 6 \cdot 2^{-(k+1)}.
\end{equation}
Now,
\begin{multline*}
\n{ L_{k+2} j_{k+1}j_k \cdots j_{n_0+1} j_{n_0}  - L_{k+1} j_{k} \cdots j_{n_0+1} j_{n_0}  }_r \\
\le  \n{j_{n_0}} \cdot \n{j_{n_0+1}} \cdots \n{j_{k}} \cdot \n{ \restr{(L_{k+2}j_{k+1} - L_{k+1})}{ j_kj_{k-1}(G_{k-1}) } }_r,
\end{multline*}
and using \eqref{eqn-ACE-regular-difference-of-Ls}, \eqref{eqn-ACE-Sk-adjusted} and \eqref{eqn-ACE-jk},
\begin{multline*} 
\n{ \restr{(L_{k+2}j_{k+1} - L_{k+1})}{ j_kj_{k-1}(G_{k-1}) } }_r \\
 \le \n{ \restr{(L_{k+2}j_{k+1} - L_{k+1} S_{k+1} j_{k+1} )}{ j_kj_{k-1}(G_{k-1}) } }_r
+ \n{ \restr{(L_{k+1} S_{k+1} j_{k+1} - L_{k+1} )}{ j_kj_{k-1}(G_{k-1}) } }_r \\
\le \n{j_{k+1}} \cdot  \n{ \restr{(L_{k+2} - L_{k+1} S_{k+1} )}{ j_{k+1}j_k(G_{k}) } }_r
+ \n{L_{k+1}}_r  \n{ \restr{( S_{k+1} j_{k+1} - Id )}{ j_kj_{k-1}(G_{k-1}) } }_r \\
< 2 \cdot 295\cdot 2^{-(k+1)} + \n{ \restr{( S_{k+1} j_{k+1} - j_{k+1} )}{ j_kj_{k-1}(G_{k-1}) } }_r + \n{ \restr{( j_{k+1} - Id )}{ j_kj_{k-1}(G_{k-1}) } }_r \\
< 590\cdot 2^{-(k+1)} + 2 \cdot 6 \cdot 2^{-(k+1)} + 2^{-(k+2)} < 603 \cdot 2^{-(k+1)}.
\end{multline*}
It therefore follows that
\begin{equation}\label{eqn-ACE-Cauchy}
\n{ L_{k+2} j_{k+1}j_k \cdots j_{n_0+1} j_{n_0}  - L_{k+1} j_{k} \cdots j_{n_0+1} j_{n_0}  }_r 
< C_{n_0} \cdot 603 \cdot 2^{-(k+1)},
\end{equation}
hence the sequence $\big(  L_{k+1} j_{k} \cdots j_{n_0+1} j_{n_0}  \big)_{k > n_0+2}$ is Cauchy in $\Lr(G_{n_0},X)$ and converges to a limit $\tilde{L}_{n_0} \in \Lr(G_{n_0},X)$ satisfying $\n{\tilde{L}_{n_0}}_r \le C_{n_0}$.
Moreover, the operators $\tilde{L}_{k}$ are ``compatible'' with the $j_k$ in the sense that for every $k$ we have
$$
\tilde{L}_k = \tilde{L}_{k+1} j_k.
$$
Let
$$
c(G_k) = \big\{  (y_k)  \st y_k \in G_k \text{ for each } k\in\N,  \text{ and } (y_k) \text{ converges in $Y$}  \big\},
$$
endowed with the supremum norm and the coordinatewise order; 
observe that this is a Banach lattice. For each $n \in \N$, define
$$
A_n = \big\{ (y_1, y_2, \dotsc, y_{n-1}, y_n, j_ny_n, j_{n+1}j_ny_n, j_{n+2}j_{n+1}j_ny_n, \dotsc) \; : \; y_j \in G_j \text{ for $1 \le j \le n$}  \big\}
$$
and observe that  $(A_n)_{n=1}^\infty$
is an increasing sequence of Banach sublattices of $c(G_k)$.

Now define an operator $\hat{L} : \bigcup_{n=1}^\infty A_n \to c(X)$ by $\hat{L}(y_k)_k = ( \tilde{L}_ky_k )_k$.
If $(y_k)_k \in A_n$, note that
$$
\hat{L}(y_k) =  (\tilde{L}_1y_1, \tilde{L}_2y_2, \dotsc, \tilde{L}_{n-1}y_{n-1}, \tilde{L}_ny_n, \tilde{L}_ny_n, \tilde{L}_ny_n, \dotsc)
$$
because of the compatibility conditions above, and
therefore 
$$
\n{  \restr{ \hat{L} }{ A_n } }_r \le 1.
$$
Since $\bigcup_{n=1}^\infty (A_n)_+$ is dense in $\big(c(G_k)\big)_+$, by Lemma \ref{lemma-regular-defined-by-pieces} we have that 
$\hat{L}$ extends to a regular operator from $c(G_k)$ to $c(X)$, which we will again denote by $\hat{L}$,
having regular norm at most one.

Let $Q : c(X) \to X$ be given by $Q(x_k)_k = \lim_{k\to\infty} x_k$, which is a positive contraction.
Define $S : Y \to c(G_k)$ by $Sy = (S_ky)_k$, and observe that it is a positive operator with norm at most $\sup_k \n{S_k}$.
If we define $L = Q \circ \hat{L} \circ S$, it is clear that $L$ is a regular map such that $q \circ L = T$ but we  can only guarantee $\n{L}_r \le \sup_k \n{S_k}$.
However, note that if we fix $k_0 \in\N$ the operator $L$ does not change when we replace $S$ by  the map $Y \to c(G_k)_{k \ge k_0}$ given by $y \mapsto (S_ky)_{k\ge k_0}$.
Therefore for each $k_0\in\N$ we have $\n{L}_r \le \sup_{k\ge k_0} \n{S_k}$, which implies $\n{L}_r \le 1$.
\end{proof}

\begin{remark}
In Theorem \ref{thm-Ando-Choi-Effros-regular maps}, since we are working with a Dedekind complete Banach lattice $Y$, requiring the BPAP is equivalent to requiring the bounded lattice approximation property  (see \cite[Cor. 4.3]{Blanco}).
\end{remark}

%------------------------------------------------------------------
%------------------------------------------------------------------
%------------------------------------------------------------------
\section{Ando-Choi-Effros liftings for regular maps under Cartwright's property (C)}\label{sec-main-2}

As already mentioned in the introduction there is a second version of the Ando-Choi-Effros theorem, where the domain of the map to be extended is assumed to be an $L_1$-predual instead of having the BAP.
Going through the proof (see, e.g. \cite[Thm. II.2.1]{Harmand-Werner-Werner}), it is easy to see that the key property of $L_1$-preduals used in the argument is the fact that their biduals are injective Banach spaces.
In the context of lattices, instead of $L_1$-preduals the natural choice would be to consider lattices with Cartwright's property (C).
Recall that a Banach lattice $X$ has property (C) if whenever $x_1, x_2, y \in X_+$ and real numbers $r_1, r_2$ satisfy
$$
\n{x_i} \le r_i, \quad \n{x_1 + x_2 + y} \le r_1 +r_2,
$$
then there exist $y_1, y_2 \in X_+$ such that $y = y_1 + y_2$ and $\n{x_i + y_i} \le r_i$.
Cartwright proved that a Banach lattice $X$ has property (C) if and only if $X^{**}$ is an injective Banach lattice \cite{Cartwright}.
We prove below a version of Theorem \ref{thm-Ando-Choi-Effros-regular maps} where the Banach lattice $Y$ is assumed to have property (C) instead of the $\lambda$-BPAP.
This time our proof is inspired by \cite{Ando-nonempty} rather than \cite{Choi-Effros}, and the presentation borrows heavily from \cite{Harmand-Werner-Werner}.

The following preparatory lemma is an adaptation of \cite[Lemma. II.2.4]{Harmand-Werner-Werner},
and deals with the fundamental step of extending a lifting defined on a finite-dimensional lattice to a larger finite-dimensional lattice. 

\begin{lemma}\label{lemma-pre-Ando-Choi-Effros}
	Suppose that $J$ is an order $M$-ideal in the Banach lattice $X$, and let $q : X \to X/J$ be the canonical quotient map.
	Let $F \subseteq E$ be finite-dimensional Banach lattices,
	and let $T : E \to X/J$ be a regular map with $\n{T}_r = 1$.
	Assume that $J$ satisfies property (C).
	Then, given $\eps>0$ and a map $L : F \to X$ with $\n{L}_r \le 1$ such that $ q \circ L = \restr{T}{F}$, there exists a map
$\tilde{L} : E \to X$ such that $\| \tilde{L} \|_r\le 1$, $q \circ \tilde{L} = T$ and $\bign{ \restr{\tilde{L}}{F} - L}_r \le \eps$.
\end{lemma}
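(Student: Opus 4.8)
The plan is to transpose the proof of \cite[Lemma II.2.4]{Harmand-Werner-Werner} to the regular setting: I will first build the lifting inside the bidual $X^{**}$, where the injectivity coming from property (C) is available, and then push it down to $X$ using the lattice local reflexivity of Proposition \ref{prop-PLRL-a-la-Dean}, cleaning up with Lemma \ref{lemma-CE}. To set the stage, note that since $J$ is an order $M$-ideal, $J^\perp$ is the range of an order $L$-projection $P$ on $X^*$; dualizing, $\pi_J := Id_{X^{**}} - P^*$ is an $M$-projection with $0 \le \pi_J \le Id$ on the Dedekind complete lattice $X^{**}$, with range $J^{\perp\perp} \equiv J^{**}$, and its complementary $M$-summand $N := \ker \pi_J$ is carried lattice-isometrically onto $(X/J)^{**}$ by $q^{**}$. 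Let $\rho : (X/J)^{**} \to N \subseteq X^{**}$ denote the inverse of this identification (so $\rho$ is a lattice isometry and $q^{**}\rho = Id$), and let $\iota : X \to X^{**}$ be the canonical embedding.

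The first step is to measure the defect of $L$. Setting $\Delta := \pi_J \circ (\iota \circ L) : F \to J^{\perp\perp}$, I would apply Theorem \ref{thm-M-projection-on-Lr} with the Dedekind complete lattice $X^{**}$ to see that $S \mapsto \pi_J \circ S$ is an $M$-projection on $\Lr(F,X^{**})$; the resulting identity $\n{\iota L}_r = \max\{\n{\Delta}_r, \n{(Id-\pi_J)\iota L}_r\}$ together with $\n{\iota L}_r = \n{L}_r \le 1$ (as $\iota$ is an isometric lattice embedding) yields $\n{\Delta}_r \le 1$. A direct check gives $(Id-\pi_J)\iota L = \rho\,\restr{T}{F}$, so $\iota L = \rho\,\restr{T}{F} + \Delta$. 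Next I extend $\Delta$ to all of $E$ while preserving the regular norm, and this is where property (C) enters: since $J^{**}$ is Dedekind complete its modulus $|\Delta| : F \to J^{**}$ exists with $\n{|\Delta|} = \n{\Delta}_r \le 1$, and since $J^{**}$ is an injective Banach lattice \cite{Cartwright} there is a positive extension $\Theta : E \to J^{**}$ of $|\Delta|$ with $\n{\Theta} \le 1$. The map $x \mapsto \Theta(|x|)$ is sublinear and dominates $\Delta$ on $F$, so the Hahn--Banach--Kantorovich theorem (valid because $J^{**}$ is Dedekind complete) furnishes a linear extension $\tilde\Delta : E \to J^{**}$ of $\Delta$ satisfying $\pm\tilde\Delta \le \Theta$, whence $\n{\tilde\Delta}_r \le \n{\Theta} \le 1$.

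Now I assemble $\tilde L^{**} := \rho\,T + \tilde\Delta : E \to X^{**}$ (viewing $T$ with values in $(X/J)^{**}$). Since $\rho\,T$ is valued in $N = \ker\pi_J$ and $\tilde\Delta$ in $\ran\pi_J$, the $M$-projection $S \mapsto \pi_J\circ S$ on $\Lr(E,X^{**})$ separates these two summands exactly, giving $\n{\tilde L^{**}}_r = \max\{\n{\tilde\Delta}_r, \n{\rho T}_r\} \le 1$; moreover $q^{**}\tilde L^{**} = T$ (as $q^{**}$ kills $J^{\perp\perp}$ and $q^{**}\rho = Id$) and $\restr{\tilde L^{**}}{F} = \rho\,\restr{T}{F} + \Delta = \iota L$. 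For the descent, Proposition \ref{prop-PLRL-a-la-Dean} identifies $\tilde L^{**}$ with an element of the unit ball of $\Lr(E,X)^{**}$, and under this identification the weak$^*$-weak$^*$ continuous bi-adjoint of the restriction map $\Lr(E,X)\to\Lr(F,X)$ sends $\tilde L^{**}$ to $\iota L$, i.e.\ to the canonical image of $L$. Goldstine's theorem then provides a net in the regular-norm unit ball of $\Lr(E,X)$ whose restrictions to $F$ converge weakly to $L$; a convex combination (Mazur) gives $L' \in \Lr(E,X)$ with $\n{L'}_r \le 1$ and $\n{\restr{(L'-L)}{F}}_r \le \eps/7$. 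Because $q\circ L = \restr{T}{F}$ and $q$ is a positive contraction, this forces $\n{\restr{(q\circ L' - T)}{F}}_r \le \eps/7$, and Lemma \ref{lemma-CE} then produces $\tilde L : E \to X$ with $q\circ\tilde L = T$, $\n{\tilde L}_r \le 1$ and $\n{\restr{(\tilde L - L')}{F}}_r \le 6\eps/7$, so that $\n{\restr{(\tilde L - L)}{F}}_r \le \eps$.

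I expect the regular-norm bookkeeping to be the main obstacle. The defect $\Delta$ typically has $\n{\Delta}_r$ as large as $1$ rather than small, so it cannot be treated as a negligible perturbation; what saves the argument is the $\oplus_\infty$ character of the $M$-decomposition of $X^{**}$, which through Theorem \ref{thm-M-projection-on-Lr} lets $\rho\,T$ and $\tilde\Delta$ sit together with regular norm $\max\{1,1\}$ instead of $1+1$. Correspondingly, the delicate point in extending $\Delta$ is to preserve the regular norm, and this is why I split it into a norm-controlled positive extension of $|\Delta|$ (using injectivity of $J^{**}$) followed by an order-interval extension of $\Delta$ itself (using Dedekind completeness via Hahn--Banach--Kantorovich); neither ingredient alone delivers a regular extension of norm $\le 1$.
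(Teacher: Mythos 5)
Your proof is correct, and its first half is essentially the paper's: the paper also works in $X^{**} = J^{\perp\perp} \oplus_\infty N$ with $N \equiv (X/J)^{**}$ and constructs precisely your $\tilde{L}^{**} = \rho T + \tilde{\Delta}$, written there as $(Id_{X^{**}}-P)L_1 + \Lambda$ where $L_1$ is an auxiliary algebraic lifting extending $L$ (which you avoid by writing $\rho T$ directly); moreover your two-step construction of $\tilde{\Delta}$ (positive extension of $|\Delta|$ by injectivity of $J^{**}$, then Hahn--Banach--Kantorovich) is exactly the content of the theorem of Arendt \cite[Thm. 2.2]{Arendt} that the paper cites at that point. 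Where you genuinely diverge is in the descent to $\Lr(E,X)$. The paper introduces $W = \Lr(E,J)$ and $V = \{ S \in W \st \ker S \supseteq F\}$, proves that $W$ is an $M$-ideal in $\Lr(E,X)$ (via Theorem \ref{thm-M-projection-on-Lr}, weak$^*$-closedness of $W^{\perp\perp}$, and \cite[Cor. II.3.6]{Harmand-Werner-Werner}), uses the bidual decomposition to conclude that $L_1$ lies in the norm closure of $B+V$, subtracts an element of $V$ to obtain an exact lifting of regular norm at most $1+\eps/2$ agreeing with $L$ on $F$, and finally repairs the norm with the proximinality-type lemma \cite[Lemma II.2.5]{Harmand-Werner-Werner}. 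You instead descend by Goldstine plus Mazur, obtaining $L'$ in the unit ball of $\Lr(E,X)$ which is not a lifting at all but is $\eps/7$-close to $L$ on $F$, and then let Lemma \ref{lemma-CE} restore the exact lifting property, the constants $6\eps/7 + \eps/7 = \eps$ closing the argument. Both correction devices ultimately rest on the order $M$-ideal hypothesis, but they are different: the paper's route has the side benefit of establishing that $\Lr(E,J)$ is an $M$-ideal in $\Lr(E,X)$, which is of independent interest, while your route reuses the Choi--Effros-style correction already packaged in Lemma \ref{lemma-CE}, making the argument shorter and more parallel to Section \ref{sec-main}. The one step you leave implicit --- that under the identification of Proposition \ref{prop-PLRL-a-la-Dean} the bi-adjoint of the restriction operator $\Lr(E,X) \to \Lr(F,X)$ is again restriction, so that it sends $\tilde{L}^{**}$ to the canonical image of $L$ --- is justified by the same tensor-diagram duality used in the proof of Lemma \ref{lemma-CE} (the adjoint of restriction is $\iota \otimes Id_{X^*}$ on the Fremlin-tensor preduals), so it is available with the paper's tools.
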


\begin{proof}
We start by defining
\begin{align*}
W &= \big\{ S \in \Lr(E,X) \mid \ran(S) \subseteq J  \} \equiv \Lr(E,J)	\\
V &= \big\{ S \in W \mid  \ker(S) \supseteq F \}.
\end{align*}
	
	Using Proposition \ref{prop-PLRL-a-la-Dean}, it follows that
\begin{align*}
W^{\perp\perp} &= \big\{ S \in \Lr(E,X^{**}) \mid \ran(S) \subseteq J^{\perp\perp}  \} \equiv \Lr(E,J^{\perp\perp})	\\
V^{\perp\perp} &= \big\{ S \in W^{\perp\perp} \mid  \ker(S) \supseteq F \}
\end{align*}	
					
	Let us now observe that $W$ is an $M$-ideal in $\Lr(E,X)$.
	By Theorem \ref{thm-M-projection-on-Lr},
	if $P : X^{**} \to J^{\perp\perp}$ is the order $M$-projection associated with $J$, then $\tilde{P} : S \mapsto P \circ S$ is an $M$-projection
	on $\Lr(E,X^{**})$.
	The range of $\tilde{P}$ is obviously contained in $W^{\perp\perp}$, and it is easy to see that the range is in fact all of $W^{\perp\perp}$.
	Since $W^{\perp\perp}$ is weak$^*$-closed, it follows from \cite[Cor. II.3.6]{Harmand-Werner-Werner} that $\tilde{P}$ is the adjoint of an $L$-projection and therefore $W$ is an $M$-ideal in $\Lr(E,X)$.

	Now let $L_1 \in \Lr(E,X)$ be any extension of $L$ such that $q \circ L_1 = T$
	this exists because $E$ is finite-dimensional, and can be achieved by a completing-the-basis argument.
	Let $B$ denote the unit ball of $\Lr(E,X)$. We would like to prove that
	\begin{equation}\label{L-in-closure}
	L_1 \in \overline{ B+V }.
	\end{equation}
	In order to achieve it, we will consider $L_1$ as an element of $\Lr(E,X^{**}) \equiv \Lr(E,X)^{**}$ and we will show that
	\begin{equation}\label{L-in-weak-star-closure}
	L_1 \in \overline{ B+V }^{w^*}.
	\end{equation}

Recall that our assumption on $J$  implies that $J^{\perp\perp}$ is an injective Banach lattice.
Since every dual Banach lattice is Dedekind complete,
	by \cite[Thm. 2.2]{Arendt} there exists an extension $\Lambda : E \to J^{\perp\perp}$ of $PL : F \to J^{\perp\perp}$ with $\n{\Lambda}_r \le 1$ .	Let us decompose $L_1$ as
	$$
	L_1 = \big( (Id_{X^{**}} - P) L_1 +  \Lambda \big) + (PL_1 - \Lambda),
	$$ 

	First note that $PL_1 - \Lambda \in V^{\perp\perp}$.
	Since $\ran(Id_{X^{**}}-P) \equiv (X/J)^{**}$, and looking at the diagram
	$$
	\xymatrix{
	E \ar[dr]_{L_1}\ar[r]^T &X/J \ar[r] & (X/J)^{**}\\
			&X \ar[u]^{q}\ar[r] &X^{**} \ar[u]_{Id_{X^{**}}-P}
	}
	$$
	it follows that $\n{(Id_{X^{**}} - P) L_1}_r = \n{T}_r = 1$.
	Since $\tilde{P}$ is an $M$-projection on $\Lr(E,X^{**})$, and $P\Lambda= \Lambda$, it follows that
	$$
	\n{(Id_{X^{**}} - P) L_1 + \Lambda}_r = \max \big\{  \n{(Id_{X^{**}} - P) L_1}_r,  \n{ \Lambda }_r \big\} = 1.
	$$
	Note also that $\Lambda$ and $(Id_{X^{**}} - P) L_1$ both belong to $\Lr(E,X^{**})$.
	Therefore,
	$$
	L_1 \in B_{\Lr(E,X^{**})} + V^{\perp\perp} = \overline{B}^{w^*} + \overline{V}^{w^*} = \overline{B+V}^{w^*}.
	$$
	giving \eqref{L-in-weak-star-closure}.
		
	Now, from \eqref{L-in-closure} there exist $R \in B$ and $S \in V$ such that $\n{ L_1 - (R + S) }_r \le \varepsilon/2$.
	Define $L_2 = L_1 - S \in \Lr(E,X)$. Note that $L_2$ is a lifting for $T$, since $S \in V \subset W$,
	but it is not guaranteed to have regular norm at most one: we only have $\n{L_2}_r \le (1+\varepsilon/2)$.
	We would like to perturb $L_2$ slightly to obtain a map that is still a lifting but whose regular norm is in fact at most 1.
	Now,
	\begin{align*}
	L_2 &\in (L_1 + V) \cap (1+\varepsilon/2)B \subset (\overline{B+V}) \cap (1+\varepsilon/2)B \\
	&\subset (\overline{B+W}) \cap (1+\varepsilon/2)B \subset B + \varepsilon(B \cap W)
	\end{align*}
	where we have used \eqref{L-in-closure} in the last step of the first line, and \cite[Lemma II.2.5]{Harmand-Werner-Werner} in the last step of the second line.
	Thus there exists $\tilde{L} \in \Lr(E,X)$ with $\| \tilde{L} \|_r \le 1$,  $\| \tilde{L} - L_2\|_r \le \varepsilon$ and $\tilde{L} - L_2 \in W$.
	It follows that $\tilde{L}$ satisfies the desired conditions.
\end{proof}

We are now ready to prove our second version of the Ando-Choi-Effros lifting theorem for regular maps.

\begin{theorem}\label{thm-Ando-Choi-Effros-regular maps-injective}
	Suppose that $J$ is an order $M$-ideal in the Banach lattice $X$,  let $q : X \to X/J$ be the canonical quotient map.
	Let $Y$ be a separable Dedekind complete Banach lattice
			and let $T : Y \to X/J$ be a regular map with $\n{T}_r = 1$.
	If $J$ satisfies property (C),
	then there exists $L : Y \to X$ such that $q \circ L = T$
	and $\n{L}_r \le 1$.
\end{theorem}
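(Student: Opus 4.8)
The plan is to imitate the inductive ``gluing of finite-dimensional pieces'' proof of the classical $L_1$-predual version \cite[Thm.~II.2.1]{Harmand-Werner-Werner}, with Lemma \ref{lemma-pre-Ando-Choi-Effros} playing the role of the finite-dimensional extension step. Two lattice-specific difficulties must be addressed: producing the finite-dimensional sublattices on which to run the induction, and ensuring the limit is regular \emph{as a map into $X$} (not merely bounded), even though $X$ is not assumed Dedekind complete.

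First I would set up the skeleton. Since $Y$ is separable and Dedekind complete it is order continuous: an order-bounded disjoint positive sequence not tending to zero would, on taking suprema over the subsets of $\N$, produce an uncountable $\delta$-separated family, contradicting separability. An order continuous separable Banach lattice has a weak order unit, so by the classical representation theorem it may be realised as a Köthe function space over a probability space $(\Omega,\Sigma,\mu)$ in which the simple functions are norm dense. Fixing an increasing sequence of finite sub-$\sigma$-algebras generating $\Sigma$, the associated spaces of step functions give an increasing sequence $G_1 \subseteq G_2 \subseteq \cdots$ of finite-dimensional sublattices of $Y$ with $\bigcup_n G_n$ dense. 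Because these are genuinely nested, no connecting maps are needed and the ``wiggling'' of Lemma \ref{lemma-LO} is avoided. I would also fix a positive majorant $T_+ : Y \to X/J$ of $T$ with $\pm T \le T_+$ and $\|T_+\|$ as close to $\|T\|_r = 1$ as desired.

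Next comes the induction. Starting from $L_0 = 0$, $S_0 = 0$ on $G_0 = \{0\}$, I would extend a pair $(L_n, S_n)$ on $G_n$ --- where $L_n$ lifts $\restr{T}{G_n}$, where $S_n \ge 0$ lifts $\restr{T_+}{G_n}$, where $\pm L_n \le S_n$ and $\|S_n\| \le \|T_+\|$ --- to such a pair on $G_{n+1}$ with $\|\restr{L_{n+1}}{G_n} - L_n\|_r$ and $\|\restr{S_{n+1}}{G_n} - S_n\|$ summably small. This is a joint, order-aware form of Lemma \ref{lemma-pre-Ando-Choi-Effros}: one extends both liftings through the injective Banach lattice $J^{\perp\perp}$ --- here property (C) enters, via Arendt's positive extension theorem \cite[Thm.~2.2]{Arendt} as in the proof of Lemma \ref{lemma-pre-Ando-Choi-Effros} --- and the domination $\pm L \le S$ is preserved because the order $M$-projection $P$ onto $J^{\perp\perp}$ satisfies $0 \le P \le Id$. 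Since every $S_n$ lifts the one fixed operator $T_+$, the restrictions satisfy $\restr{T_+}{G_{n+1}}\big|_{G_n} = \restr{T_+}{G_n}$ exactly, which is what lets the $S_n$ be chosen convergent.

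Finally I would pass to the limit. The estimates make $(\restr{L_n}{G_m})_n$ and $(\restr{S_n}{G_m})_n$ convergent for each $m$, so $L_n$ and $S_n$ converge on the dense set $\bigcup_n G_n$ and extend by continuity to $L, S : Y \to X$ with $q \circ L = T$. Here lies the one genuinely new obstacle compared with the Banach-space proof: one cannot simply invoke Lemma \ref{lemma-regular-defined-by-pieces} to conclude regularity, since $X$ need not be Dedekind complete, and indeed the mere bounds $\|\restr{L}{G_m}\|_r \le 1$ do not by themselves force $L$ to be regular into $X$ --- gluing into the Dedekind complete space $X^{**}$ can create a modulus that escapes $X$. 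Carrying the dominating operators $S_n$ sidesteps exactly this: $S$ is a limit of positive operators, and positivity survives continuous extension to $Y$ (it needs only that $\bigcup_n (G_n)_+$ is dense in $Y_+$ and that $X_+$ is closed), with no completeness hypothesis on $X$. Thus $S : Y \to X$ is positive, $\|S\| \le \|T_+\|$, and $\pm L \le S$, so $\|L\|_r \le \|S\| \le \|T_+\|$; since $\|T_+\|$ may be taken arbitrarily close to $1$, a routine limiting argument yields $\|L\|_r \le 1$. The main point to pin down is the joint finite-dimensional extension step keeping the domination and the norm bound on $S_n$ simultaneously; the remainder is bookkeeping with the regular norm.
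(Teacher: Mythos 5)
Your skeleton is correct and genuinely different from the paper's: using that a separable Dedekind complete Banach lattice is order continuous, hence representable as a K\"othe function space in which step functions over an increasing sequence of finite partitions are dense, you obtain an honestly nested chain $G_1 \subseteq G_2 \subseteq \cdots$ of finite-dimensional sublattices with dense union, so the connecting isomorphisms $j_k$ and the ``wiggle'' of Lemma \ref{lemma-LO} disappear; your concern about regularity of the limit when $X$ is not Dedekind complete is also well taken. The gap is the inductive step, which is the entire difficulty of the theorem and which you assert rather than prove. Your ``joint, order-aware form of Lemma \ref{lemma-pre-Ando-Choi-Effros}'' must produce $S_{n+1} \ge 0$ lifting $T_+$, dominating $\pm L_{n+1}$, close to $S_n$ on $G_n$, \emph{and} with $\n{S_{n+1}} \le \n{T_+}$. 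The one observation you offer ($0 \le P \le Id$ preserves domination) covers only the easy bidual decomposition in the proof of Lemma \ref{lemma-pre-Ando-Choi-Effros}. It does not cover: (i) the existence of even an initial joint extension of the pair $(L_n,S_n)$ --- on the atoms $e_i$ of $G_{n+1}$ you need lifts $z_i$ of $Te_i$ and $x_i \ge |z_i|$ of $T_+e_i$ whose sums are (approximately) prescribed by $L_n$ and $S_n$, and the quotient inequality $\sum_i |Te_i| \le \sum_i T_+e_i$ does \emph{not} automatically lift to $\sum_i |z_i| \le \sum_i x_i$ in $X$; and (ii) the exact-norm correction, which in Lemma \ref{lemma-pre-Ando-Choi-Effros} is done via \cite[Lemma II.2.5]{Harmand-Werner-Werner} and produces a correction term in $B \cap W$ carrying no order information whatsoever, so subtracting it destroys both $S_{n+1}\ge 0$ and $\pm L_{n+1} \le S_{n+1}$. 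This is not a detail one can wave through: the paper's closing Remark states explicitly that an order-respecting version of Lemma \ref{lemma-pre-Ando-Choi-Effros} is precisely what the author was unable to obtain, for want of an order-compatible analogue of \cite[Lemma II.2.5]{Harmand-Werner-Werner}. Your induction quietly assumes something at least that strong.

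There is a second, independent gap in your endgame. Each run of your construction is tied to one fixed majorant $T_+$, and delivers $\n{L}_r \le \n{S} \le \n{T_+}$. Since $X/J$ is not assumed Dedekind complete, the infimum defining $\n{T}_r = 1$ need not be attained, so every admissible $T_+$ may have $\n{T_+} > 1$; you therefore obtain, for each $\eta > 0$, \emph{some} lifting $L_\eta$ with $\n{L_\eta}_r \le 1+\eta$, and these are different operators. There is no ``routine limiting argument'' upgrading this to a single lifting with $\n{L}_r \le 1$: the set of liftings of $T$ is a norm-closed affine subspace of $\Lr(Y,X)$ with no compactness available, and passing from $(1+\eta)$-liftings to a $1$-lifting is exactly what the $M$-ideal perturbation machinery --- the order-incompatible step (ii) above --- exists to accomplish. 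The paper sidesteps both problems by never asking the extension step to respect order: it applies Lemma \ref{lemma-pre-Ando-Choi-Effros} purely at the level of the regular norm along the chain of (almost nested) sublattices, and recovers regularity of the limit operator through the sequence-space gluing of Lemma \ref{lemma-regular-defined-by-pieces}; your worry about the Dedekind completeness implicitly used there is fair, but your proposed fix trades that issue for an unproved lemma that the paper itself flags as out of reach.
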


\begin{proof}
Once again fix a dense sequence $\{y_n\}_{n=1}^\infty$ in the unit sphere of $Y$.
We inductively define a sequence $(G_k)$ of finite-dimensional vector sublattices of $Y$ and lattice isomorphisms onto their images $j_k : G_{k} \to G_{k+1}$ as follows.
Let $G_{0} = \{ 0\}$.
Having defined $G_k$ and $j_{k-1}$ for an integer $k \ge 0$ (with the convention $j_{-1} = 0$),
consider the finite-dimensional subspace $F_{k+1} = G_k + \R y_{k+1}$, and apply Lemma \ref{lemma-LO} to find a sublattice $Z_{k+1}$ of $Y$ containing $F_{k+1}$, and  a lattice homomorphic projection $P_{k+1} : Z_{k+1}\to Z_{k+1}$ onto a finite-dimensional vector sublattice $G_{k+1} \subset Z_{k+1}$ such that
 $$
 \n{ \restr{ (P_{k+1}-Id)}{F_{k+1}} } < 2^{-(k+1)} \quad \text{and} \quad \n{ \restr{ (P_{k+1}-Id)}{G_k} }_r < 2^{-(k+1)}.
 $$ 
Define $j_{k} : G_{k} \to G_{k+1}$ as the restriction of $P_{k+1}$ to $G_k$, and observe that \eqref{eqn-ACE-jk} holds again.
Next, we construct inductively a sequence of maps $L_k : G_k \to X$ such that $\n{L_k}_r \le 1$ and $q \circ L_k = \restr{T}{G_k}$.
Let $L_0 = 0$, and assume we have defined such a map $L_k$ for a particular integer $k \ge 0$.
Observe that
\begin{multline}
\n{ \restr{  (q \circ L_k \circ j_k^{-1} - T)}{ j_k(G_k) } }_r =  \n{ \restr{  (T \circ j_k^{-1} - T)}{ j_k(G_k) } }_r \le \n{ T \circ \restr{(Id - j_k)}{G_k} }_r < 2^{-(k+1)}
\end{multline}
and therefore, using Lemma \ref{lemma-CE}, there exists $\mathbb{L}_{k+1} : j_k(G_k) \to X$ such that $\n{ \mathbb{L}_{k+1} }_r \le $,  $q \circ \mathbb{L}_{k+1} = \restr{T}{j_k(G_k)}$ and 
$\n{ \mathbb{L}_{k+1} - L_k \circ j_k^{-1}  }_r < 6 \cdot 2^{-(k+1)}$.
Now, by Lemma \ref{lemma-pre-Ando-Choi-Effros}, there exists $L_{k+1} : G_{k+1} \to X$ such that $\n{L_{k+1}}_r \le 1$, $q \circ L_{k+1} = \restr{T}{G_{k+1}}$ and $\n{ \restr{L_{k+1}}{j_k(G_k)} - \mathbb{L}_{k+1}  }_r < 2^{-(k+1)}$, from where it follows (using $\n{j_k} \le 2$)
\begin{multline}\label{eqn-ACE-injective-almost-compatible}
\n{ L_{k+1}\circ j_k - L_k }_r \le \n{ L_{k+1}\circ j_k - \mathbb{L}_{k+1} \circ j_k}_r+ \n{  \mathbb{L}_{k+1} \circ j_k -  L_k }_r \\
 \le \n{j_k}\n{ \restr{L_{k+1}}{j_k(G_k)} - \mathbb{L}_{k+1}  }_r + \n{j_k} \n{\mathbb{L}_{k+1} - L_k \circ j_k^{-1}} < 14 \cdot 2^{-(k+1)}.
\end{multline}
Fix a number $n_0 \in \N$, and
consider the sequence of operators $\big(  L_{k+1} j_{k} \cdots j_{n_0+1} j_{n_0}  \big)_{k > n_0}$ in $\Lr(G_{n_0},X)$.
It is easy to see from \eqref{eqn-ACE-injective-almost-compatible} that the sequence is Cauchy, and therefore it converges to a limit 
$\tilde{L}_{n_0} \in \Lr(G_{n_0},X)$. 
Moreover, the operators $\tilde{L}_{k}$ are ``compatible'' with the $j_k$ in the sense that for every $k$ we have
$
\tilde{L}_k = \tilde{L}_{k+1} j_k.
$
The rest of the proof continues in exactly the same way as in the proof of Theorem \ref{thm-Ando-Choi-Effros-regular maps}.
\end{proof}

\begin{remark}
In Theorems \ref{thm-Ando-Choi-Effros-regular maps} and \ref{thm-Ando-Choi-Effros-regular maps-injective},
it would be desirable to have the lifting $L$ be a positive operator when the initial map $T$ is a positive operator.
It is possible that the arguments above already prove such results, but we have been unable to verify it.
In the case of Theorem \ref{thm-Ando-Choi-Effros-regular maps} the key step would be to adapt Lemma \ref{lemma-CE}, where if $T$ and $L$ are positive then $\tilde{L}$ should be chosen positive as well.
To reuse the current proof for Lemma \ref{lemma-CE}, we would need a version of \cite[Lemma 2.1]{Choi-Effros} that takes positivity into account. This would require to prove positivity-preserving versions of the first four results in  \cite[Part I, Sec. 5]{Alfsen-Effros}, 
and that does not appear to be straightforward.
%Note that such a version of Lemma \ref{lemma-CE} were true, we would automatically get a version of Theorem \ref{thm-Ando-Choi-Effros-regular maps} for positive maps which would have the additional advantage of not requiring $X$ to be Dedekind complete.
For Theorem \ref{thm-Ando-Choi-Effros-regular maps-injective}, the key would be to obtain a version of Lemma \ref{lemma-pre-Ando-Choi-Effros} where once again $\widetilde{L}$ can be chosen positive when $T$ and $L$ are.
Most of our proof does deal well with positivity, but we have not been able to obtain an appropriate accompanying version of  \cite[Lemma II.2.5]{Harmand-Werner-Werner}.
\end{remark}

\section*{Acknowledgments}
The author thanks Profs. W.B. Johnson, V. Troitsky, A. Blanco and P. Tradacete for useful discussions and for pointing out several references.

\def\cprime{$'$}
\providecommand{\bysame}{\leavevmode\hbox to3em{\hrulefill}\thinspace}
\providecommand{\MR}{\relax\ifhmode\unskip\space\fi MR }
% \MRhref is called by the amsart/book/proc definition of \MR.
\providecommand{\MRhref}[2]{%
  \href{http://www.ams.org/mathscinet-getitem?mr=#1}{#2}
}
\providecommand{\href}[2]{#2}

\end{document}